\newtheorem{letterthm}{Theorem}                   
\theoremstyle{plain}               
\newtheorem{theorem}{Theorem}[section]       
\newtheorem{lemma}[theorem]{Lemma}          
\newtheorem{proposition}[theorem]{Proposition}
\newtheorem{corollary}[theorem]{Corollary}
\theoremstyle{definition}           
\newtheorem{definition}[theorem]{Definition}
\newtheorem{example}[theorem]{Example}
\newtheorem{remark}[theorem]{Remark}
\newcommand{\NN}{\mathbb{N}}
\DeclareMathOperator{\PP}{\mathcal{P}}
\DeclareMathOperator{\QQ}{\mathbb{Q}}
\newcommand{\fm}{\mathfrak{m}}
\DeclareMathOperator{\Hom}{Hom}
\DeclareMathOperator{\coker}{coker}
\crefname{theorem}{Theorem}{Theorems}
\crefname{lemma}{Lemma}{Lemmas}
\crefname{proposition}{Proposition}{Propositions}
\crefname{corollary}{Corollary}{Corollaries}
\crefname{definition}{Definition}{Definitions}
\crefname{example}{Example}{Examples}
\crefname{remark}{Remark}{Remarks}
\Crefname{theorem}{Theorem}{Theorems}
\Crefname{lemma}{Lemma}{Lemmas}
\Crefname{proposition}{Proposition}{Propositions}
\Crefname{corollary}{Corollary}{Corollaries}
\Crefname{definition}{Definition}{Definitions}
\Crefname{example}{Example}{Examples}
\Crefname{remark}{Remark}{Remarks}
\crefname{letterthm}{Theorem}{Theorems}
\Crefname{letterthm}{Theorem}{Theorems}
\title{On $\fm$-adic Continuity of $F$-Splitting Ratio}
\author[Akter]{Maria Akter}
\address{Department of Mathematics, University of Alabama, Tuscaloosa, AL 35487 USA}
\email{makter2@crimson.ua.edu}
\urladdr{\url{https://mariaakter-math.github.io/}}
\begin{document}

\maketitle

\textbf{Abstract.} We investigate the $\mathfrak{m}$-adic continuity of Frobenius splitting dimensions and ratios for divisor pairs $(R,\Delta)$ in an $F$-finite local ring $(R,\mathfrak{m},k)$ of prime characteristic $p>0$. Our main result states that if $R$ is an $F$-finite, $\mathbb{Q}$-Gorenstein, Cohen-Macaulay local ring of prime characteristic $p>0$, the Frobenius splitting numbers $a^{\Delta}_e(R)$ remain unchanged under a suitable small perturbation.  Moreover, we establish a desirable inequality of Frobenius splitting dimensions under general perturbations. That is, $\dim (R/(\mathcal{P}(R/(f),\Delta|_{f})))\leq \dim (R/(\mathcal{P}(R/(f+\varepsilon),\Delta|_{(f+\varepsilon)})))$ for all $\varepsilon \in \fm^{N\gg0}$, providing an example that demonstrates strict improvement can occur.

\section{Introduction}

Let $(R,\fm,k)$ be a local $F$-finite ring of prime characteristic $p>0$. For each $e\in\NN$, let $F^e_{*}R$ be the finitely generated $R$-module obtained via the restriction of scalars under the $e$th
 iteration of the Frobenius map. If $e\in \NN$, consider the $e$th
 Frobenius splitting number of $R$, denoted by $a_e(R)$, defined by $F^e_{*}R\cong R^{\bigoplus{a_e(R)}}\bigoplus M_e$, where $M_e$ has no free $R$-summand. The Frobenius splitting ratio of $R$ is the asymptotic rate of the number $a_e(R)$ grows as $e\to \infty$, see \cite{AberbachEnescu2005}. If $(R,\fm,k)$ is a local $F$-finite, $\QQ$-Gorenstein, Cohen-Macaulay ring of prime characteristic $p>0$ and $0\neq f\in\fm$, then the Frobenius splitting numbers of $R/(f)$ can be measured through compatible Frobenius splittings in $R$ via techniques of Inversion of Adjunction for $F$-purity, see \cite{PolstraSimpsonTucker2025}. For other relevant situations, see \cite{Das2015,Fedder83,Schwede2009,Taylor2023}. This article employs this technique, which allows comparisons between the Frobenius splitting ratio of the pair $R/(f)$ and $R/(f+\varepsilon)$ for $\varepsilon \in \fm^N$ and $N\gg0$. As a motivating example let $R=k[[x_1,x_2,\dots,x_n]]$ be the power series ring in $n$ variables over a field $k$. If we add a term of sufficiently high order to an element $f\in R$, will the Frobenius splitting ratio be similar to that of $f$?

In 1956, Samuel \cite{Samuel1956} first introduced perturbation problems in local rings, showing that if $f$ defines a hypersurface with an isolated singularity, then for sufficiently small perturbations $g$, the quotient rings $R/(f)$ and $R/(g)$ remain isomorphic. Hironaka \cite{MR201433} extended this result to reduced, equidimensional isolated singularities, establishing stability under high-order perturbations. Cutkosky and Srinivasan \cite{CutkoskySrinivasan1993} further generalized these ideas to complete intersection prime ideals and reduced equidimensional ideals. Srinivas and Trivedi \cite{SrinivasTrivedi1996} demonstrated that in generalized Cohen-Macaulay rings, the associated graded rings of a parameter ideal remain unchanged under small perturbations.

Numerical invariants in positive characteristic have been widely studied, especially in relation to singularities under $\fm$-adic perturbations. Srinivas and Trivedi \cite{SrinivasTrivedi1996} showed that the Hilbert function generally decreases under small perturbations, while Ma, Quy, and Smirnov \cite{MaQuySmirnov2020} extended this by proving that the Hilbert function remains invariant for filter-regular sequences. Polstra and Smirnov \cite{PolstraSmirnov2020} demonstrate $\fm$-adic continuity of the Hilbert-Kunz multiplicity in parameter ideals of Cohen-Macaulay rings, and similar results are shown for the $F$-signature in Gorenstein rings \cite{PolstraSmirnov2020} and for $\QQ$-Gorenstein rings \cite{DeStefaniSmirnov2020}.
This area of research continues to develop in more recent works \cite{DeStefaniSmirnov2020,PolstraSimpson23}. 

In \cite[Theorem~3.6]{PolstraSimpson23}, Polstra and Simpson proved that if $(R, \fm, k)$ is a local $F$-finite, Cohen-Macaulay, $\QQ$-Gorenstein ring of prime
characteristic $p>0$ and $f \in \fm$ is a nonzero divisor such that $R/(f)$ is Gorenstein in codimension~1 and $F$-pure, then there exists $N\gg0$ such that $R/(f + \varepsilon)$ is $F$-pure for every $\varepsilon \in \fm^N$. In \cite[Theorem~3.8, Corollary~3.9]{DeStefaniSmirnov2020}, De Stefani and Smirnov prove the $\fm$-adic continuity of another type of numerical invariant, defined by Hochster and Yao in \cite[Definition~0.1]{YaoHochster2022}, called the $F$-rational signature $s_{\mathrm{rat}}(R)$ for an $F$-finite Cohen-Macaulay local ring of prime characteristic $p>0$. 

A natural question that arises is whether $\mathfrak{m}$-adic continuity holds for another type of numerical invariant used to measure singularities, known as the Frobenius splitting ratio, denoted by $r_F(R)$. This invariant was first introduced by Aberbach and Enescu in \cite[Definition~2.4]{AberbachEnescu2005}. Specifically, let $(R,\fm,k)$ be a local $F$-finite, $F$-pure ring of prime
characteristic $p>0$, where $k$ is a perfect field, and set $n\coloneqq\dim\bigl(R/\PP(R)\bigr),$
where $\PP(R)$ denotes the splitting prime ideal of $R$. We refer to $n$ as
the splitting dimension of $R$. The Frobenius splitting ratio of $R$
is then defined by
$r_F(R)\coloneqq \lim_{e\to\infty}\frac{a_e}{p^{en}},$
where this limit is known to exist, see \cite{MR2980498}.
Moreover, it satisfies $0< r_F(R)\le 1.$ 
The focus of this article is to study splitting ratios and splitting dimensions of perturbations under suitable hypotheses; the following applications are of the main results of the article.
\begin{letterthm}
\label{thm:A}
   Let $(R,\fm, k)$ be a local $F$-finite, $F$-pure, $\QQ$-Gorenstein, Cohen-Macaulay ring of prime characteristic $p>0$ and $f\in R$ be a nonzero divisor of $R$. Let $\PP(R/(f))$ be the lift of the splitting prime of $R/(f)$ in $R$, then there exist an $\fm$-primary ideal $\mathfrak{a}$ in $R$ and $e_0\in \NN$ such that for all $\varepsilon\in \mathfrak{a}\cap\PP^{[p^{e_{0}}]} \ \text{and all } e\in\NN$
 \[
 a_e(R/(f))=a_e(R/(f+\varepsilon)). 
 \]
In particular, if $\PP(R/(f))=\fm/(f)$, then there exists an $\fm$-primary ideal $\mathfrak{a}$ so that for all $\varepsilon\in\mathfrak{a}$, \[
\PP(R/(f+\varepsilon))=\fm/(f+\varepsilon).
\] 
\end{letterthm}

Our next main result establishes a desirable relationship between the splitting dimensions of $R/(f)$ and $R/(f +\varepsilon)$ for $\varepsilon\in \fm^{[p^{e_0}]}$.
\begin{letterthm}
\label{thm:B}
 Let $(R,\fm,k)$ be a local $F$-finite, $F$-pure, $\QQ$-Gorenstein, Cohen-Macaulay ring of prime characteristic $p>0$ and $f\in \fm$ be a nonzero divisor of $R$. Then there exists an integer $e_{0}\in \NN$ such that for all $\varepsilon\in \fm^{[p^{e_0}]}$,
 \[
    \dim \bigl(R/\PP(R/(f))\bigr)\leq \dim\bigl (R/\PP(R/(f+\varepsilon))\bigr).
    \]
\end{letterthm}

However, equality in Theorem~\hyperref[thm:B]{\ref*{thm:B}} does not hold in general, as illustrated in Example~\hyperref[ex:four-two]{\ref*{ex:four-two}}.

Theorem~\hyperref[thm:C]{\ref*{thm:C}} is a result of independent interest and is applicable to any $F$-finite Gorenstein local ring.
\begin{letterthm}
\label{thm:C}       
Let $(R,\mathfrak{m},k)$ be an $F$-finite, $F$-pure, Gorenstein  local ring of dimension $d$ of prime characteristic $p>0$. The following are equivalent:
 \begin{enumerate}[label=(\roman*)]
 \item$a_1(R)=[k^{1/p}:k].$
 \item There exists $e\in \mathbb{N}$ such that $a_e(R)=[k^{1/p^e}:k]$.
 \item For all $e\in \mathbb{N}$, $a_e(R)=[k^{1/p^e}:k]$.
     \item$I_1(R)=\fm.$
     \item There exists $e\in \mathbb{N}$ such that $I_e(R)=\fm$. 
     \item For all $e\in \mathbb{N}$, $I_e(R)=\fm$.
     \item The splitting prime ideal $\mathcal{P}(R)$ of $R$ equals $\fm$.
 \end{enumerate}
\end{letterthm}
\section*{Acknowledgments}
 This work is based on the author’s thesis. The author would like to thank her advisor, \textbf{Thomas Polstra}, for being so generous with his time, constant support, guidance, and encouragement throughout the research journey. His expertise and mentorship have 
\section{Preliminaries}\label{sec:prelem}

Throughout this paper, $(R,\fm,k)$ denotes a Noetherian local ring of prime characteristic $p>0$, with residue field $k=R/\fm$. The Frobenius endomorphism $F: R \to R$ will be denoted as $F$ and $F^e$ denotes the $e$th iteration of Frobenius endomorphism $F$. For a given ideal $I \subseteq R$ and $e \in \NN$, let $I^{[p^e]}\coloneqq(i^{p^e} \mid i \in I)$ represent the expansion of $I$ under $F^e$. We say $R$ is \emph{$F$-finite} if $F^e_{*}R$ is finitely generated $R$-module. If $M$ is an $R$-module then $F^e_{*}M$ is the $R$-module obtained by the restrictions of scalars under $F^e$.

The following classical result is fundamental for understanding the number of free direct $R$-summands of $F^e_* R$.

\begin{theorem}\label{thm:2.1}
\textup{\cite{Hochster77}} 
Let $(R,\fm,k)$ be an excellent local ring, and let $M$ be a finitely generated $R$-module equipped with a map $R \to M$. Then the following statements are equivalent:
\begin{enumerate}[label=(\roman*)]
    \item The map $R \to M$ splits.
    \item The induced map $E_R(k) \to E_R(k) \otimes_R M$ is injective, where $E_R(k)$ denotes the injective hull of $k$.
    \item The map $R \to M$ is pure; that is, for any $R$-module $N$, the natural map $N \to M \otimes_R N$ is injective.
\end{enumerate}
\end{theorem}

Let $M$ be a finitely generated $R$-module that has no free direct $R$-summands. Then for every $m \in M$, the map 
\[
R \longrightarrow M, \quad 1 \mapsto m,
\] 
does not split. Therefore, by Theorem~\hyperref[thm:2.1]{\ref*{thm:2.1}}, the induced map
\[
E_R(k) \longrightarrow E_R(k) \otimes_R M, \quad a \mapsto a \otimes m,
\] 
is not injective. The following results from \cite{AberbachEnescu2005} are fundamental in defining the splitting prime ideal of $R$.

\begin{proposition}\textnormal{\cite{AberbachEnescu2005}}
    Let $(R, \fm, k)$ be a local ring, $M$ be a finitely generated $R$-module, and let $u$ be the socle generator of $E_R(k)$. Then $M$ has no nonzero free direct summands if and only if for every $m \in M$, 
    \[
u \otimes m = 0 \quad \text{in } E_R(k) \otimes_R M.
\]
\end{proposition}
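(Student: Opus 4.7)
The plan is to prove the two directions by passing back and forth through Theorem 2.1 (the Hochster splitting criterion) and exploiting the essentiality of $E_R(k)$ over its one-dimensional socle $k \cdot u$.

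For the forward direction, assume $M$ has no nonzero free $R$-direct summand, and let $m \in M$ be arbitrary. First I would consider the $R$-linear map $\varphi_m \colon R \to M$ defined by $\varphi_m(1) = m$. Since a splitting of $\varphi_m$ would exhibit $Rm \cong R$ as a free direct summand of $M$, the hypothesis forces $\varphi_m$ not to split. By the equivalence (i) $\Leftrightarrow$ (iii) in Theorem 2.1, $\varphi_m$ also fails to be pure, and taking $N = E_R(k)$ shows that the induced map $E_R(k) \otimes_R R \to E_R(k) \otimes_R M$ is not injective. Identifying the left-hand side with $E_R(k)$ via $a \mapsto a \otimes 1$, I conclude that the kernel $K_m := \{a \in E_R(k) \mid a \otimes m = 0\}$ is a nonzero submodule of $E_R(k)$. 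The key step is now to invoke that $E_R(k)$ is an essential extension of its socle; equivalently, $\operatorname{soc}(E_R(k)) = k \cdot u$ meets every nonzero submodule. Hence $u \in K_m$, and so $u \otimes m = 0$ in $E_R(k) \otimes_R M$.

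For the converse, I argue by contrapositive: assume $M$ admits a nonzero free direct summand, so there is a decomposition $M = R \oplus M'$. Take $m = (1,0) \in M$; then the projection $\pi \colon M \twoheadrightarrow R$ is a splitting of $\varphi_m \colon R \to M$. By the implication (i) $\Rightarrow$ (iii) of Theorem 2.1 applied with $N = E_R(k)$, the induced map $E_R(k) \to E_R(k) \otimes_R M$, $a \mapsto a \otimes m$, is injective. Since $u \neq 0$ in $E_R(k)$, this produces $u \otimes m \neq 0$, exhibiting an element of $M$ violating the socle-annihilation property, as required.

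The only nontrivial ingredient is the essentiality step in the forward direction, and that is really just the defining property of $E_R(k)$ as the injective hull of $k$; every other move is a direct appeal to Theorem 2.1. So I do not anticipate a genuine obstacle, provided one is careful about identifying $E_R(k) \otimes_R R$ with $E_R(k)$ and about checking that $u \cdot \fm = 0$ ensures $u$ lies in any nonzero submodule.
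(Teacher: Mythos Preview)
Your proposal is correct and follows essentially the same line as the paper. The paper does not give a self-contained proof of this proposition (it is cited from Aberbach--Enescu), but the paragraph immediately preceding it sketches exactly your forward-direction argument: for $m \in M$ the map $R \to M$, $1 \mapsto m$, fails to split, so by Theorem~2.1 the induced $E_R(k) \to E_R(k) \otimes_R M$ is not injective. You then supply the one step the paper leaves implicit, namely that the nonzero kernel must contain the socle generator $u$ by essentiality of $E_R(k)$ over $k$, and you also write out the easy converse; both are handled correctly.
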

\begin{corollary}\textnormal{\cite{AberbachEnescu2005}}
    Let $(R, \fm, k)$ be an $F$-finite, reduced local ring of characteristic $p$. Let 
$F^e_{*}R = R^{a_{e}(R)} \oplus M_{e}$ be a direct sum decomposition of $F^e_{*}R$ over $R$ as an $R$-module, where $M_{e}$ has no free direct $R$-summands. Define
\[
I_e(R) := \bigl\{ r \in R \mid F^e_* r \otimes u = 0 \ \text{in } F^e_* R \otimes_R E_R(k) \bigr\}.
\]  
Then, $a_{e}(R) = \lambda_R\bigl(F^e_{*}R/F^{e}_{*}I_{e}(R)\bigr)$, where $\lambda_R(-)$ denotes the length as an $R$-module.
\end{corollary}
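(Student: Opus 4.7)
The plan is to identify $F^e_{*}I_e(R)$ as the kernel of an explicit $R$-linear map and then read off its cokernel from the given direct-sum decomposition. First I would define
\[
\phi : F^e_{*}R \longrightarrow F^e_{*}R \otimes_R E_R(k), \qquad \phi(F^e_{*}r) = F^e_{*}r \otimes u.
\]
Using the standard $R$-structure $s\cdot F^e_{*}r = F^e_{*}(s^{p^e}r)$ on $F^e_{*}R$, one checks directly that $\phi$ is $R$-linear. By the very definition of $I_e(R)$, the kernel of $\phi$ equals $F^e_{*}I_e(R)$, and in particular $F^e_{*}I_e(R)$ is an $R$-submodule of $F^e_{*}R$ (equivalently, $I_e(R)$ is an ideal of $R$).

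Next I would split $\phi$ along the decomposition $F^e_{*}R \cong R^{a_e(R)} \oplus M_e$. Since tensor products distribute over direct sums,
\[
F^e_{*}R \otimes_R E_R(k) \;\cong\; E_R(k)^{a_e(R)} \oplus \bigl(M_e \otimes_R E_R(k)\bigr),
\]
and under this identification $\phi$ sends an element corresponding to $(r_1,\ldots,r_{a_e(R)},m)$ to $(r_1 u,\ldots,r_{a_e(R)}u,\; m \otimes u)$. Proposition 2.2 gives that $u \otimes m = 0$ in $M_e \otimes_R E_R(k)$ for every $m \in M_e$, since $M_e$ has no nonzero free direct summand, so the last coordinate is automatically zero. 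On the free coordinates, since $u$ is a socle generator of $E_R(k)$ one has $\operatorname{ann}_R(u) = \fm$ (because $\fm u = 0$, $u\neq 0$, and $\fm$ is maximal); hence $r_i u = 0$ in $E_R(k)$ iff $r_i \in \fm$. Consequently $F^e_{*}I_e(R)$ corresponds under the decomposition to the submodule $\fm R^{a_e(R)} \oplus M_e$ of $R^{a_e(R)} \oplus M_e$.

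Passing to the quotient then gives
\[
F^e_{*}R / F^e_{*}I_e(R) \;\cong\; R^{a_e(R)}/\fm R^{a_e(R)} \;\cong\; k^{a_e(R)},
\]
whose $R$-length is visibly $a_e(R)$, as desired. The only subtle point in the argument is keeping the twisted $R$-module structure on $F^e_{*}R$ straight while verifying the $R$-linearity of $\phi$ and the compatibility of the tensor-on-$u$ operation with the direct-sum decomposition; once that bookkeeping is in place, the result is a direct combination of the decomposition of $F^e_{*}R$, Proposition 2.2, and the identification $\operatorname{ann}_R(u) = \fm$.
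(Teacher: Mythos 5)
Your argument is correct, and it is the standard proof of this fact (which the paper itself does not prove but cites to Aberbach--Enescu). The key points are all in place: the map $\phi = (-\otimes u)$ on $F^e_*R$ is $R$-linear with kernel exactly $F^e_*I_e(R)$ by the definition of $I_e(R)$; tensoring with $u$ respects the direct-sum decomposition $F^e_*R \cong R^{a_e(R)} \oplus M_e$; Proposition 2.2 applies to $M_e$ (a finite $R$-module, being a summand of the finite module $F^e_*R$) and kills that factor; and $\operatorname{ann}_R(u) = \fm$ because $\fm u = 0$, $u \neq 0$, and $\fm$ is maximal, so the free factor contributes $\fm^{\oplus a_e(R)}$ to the kernel. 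The quotient is then $k^{\oplus a_e(R)}$, of length $a_e(R)$. The verification of $R$-linearity of $\phi$, which you flag as the one subtle point, is indeed immediate once one notes that $\phi$ is just the canonical map $F^e_*R \to F^e_*R \otimes_R E_R(k)$, $x \mapsto x \otimes u$, which is $R$-linear for the $R$-module structure on $F^e_*R$, twisted or not.
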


The concept of splitting prime ideal, Frobenius splitting dimension, and Frobenius splitting ratio were first introduced by Aberbach and Enescu \cite{AberbachEnescu2005}. The following definition is from \cite{AberbachEnescu2005}.
\begin{definition}\cite[Definition~3.2]{AberbachEnescu2005}
Let $(R,\fm,k)$ be an $F$-finite, $F$-pure local ring of prime characteristic $p>0$. For every $c\in R$ and $e\in \NN $, let $\phi_{c,e}:R\to F^e_{*}R$ be the $R$-linear map defined by $\phi_{c,e}(1)=F^e_{*}c$. Then the \emph{splitting prime ideal} of $R$ is defined by
        \begin{align*}
     \PP(R)&= \{ c\in R \mid \phi_{c,e} \, \text {does not split for any }e\in \NN\}\\&=\bigcap\limits_{e\in \NN} I_e(R).
 \end{align*}
\end{definition}  

\begin{remark}
    For a local ring $(R,\fm,k)$ and $0\neq f\in\fm$, we identify $\PP(R/(f))$ as the lift of the splitting prime ideal $\PP(R)$ in $R$.
\end{remark}

\begin{definition}
    Let $(R,\fm,k)$ be a local $F$-finite ring of prime  characteristic $p>0$. Let $d=\dim (R)$, then the \emph{$F$-signature} of $R$ is \[s(R)=\lim_{e \to \infty }\frac {\lambda_{R}(R/I_{e}(R))}{p^{ed}}.\]
    This limit always exists by \cite{Tucker2012} and enjoys the property
        \[0\leq s(R)\leq 1.\]
      \end{definition}  
    
The $F$-signature of a local ring $(R,\fm,k)$ detects regularity, i.e., $s(R)=1$ if and only if $R$ is a regular local ring \cite[Corollary~16]{Huneke_2002}. Also, $F$-signature detects strong $F$-regularity, i.e., $s(R)>0$ if and only if $R$ is strongly $F$-regular \cite[Main Result]{AberbachLeuschke2003}.

\begin{definition}
    Let $(R,\fm,k)$ be a local $F$-finite, $F$-pure ring of prime characteristic $p>0$. Let $n\coloneqq\dim (R/\PP(R))$, we call $n$ the \emph{splitting dimension} of $R$. The \emph{Frobenius splitting ratio} of $R$ is
    \[
    r_{F}(R)\coloneqq\lim_{e\to \infty}\frac{\lambda_{R}(R/I_{e}(R))}{p^{en}}.
    \]
        This limit always exists by \cite[Corollary~4.3]{MR2980498} and enjoys the property
        \[0<r_F(R)\leq 1.\]
\end{definition}

The Frobenius splitting ratio of $R$ is a generalization of $F$-signature and accurately measures the asymptotic behavior of the Frobenius splitting numbers of $R$ if $\PP(R)\neq {0}$. The results of \cite{MR2980498} on $F$-splitting ratios are generalization of results of \cite{Tucker2012} and \cite{AberbachEnescu2005} on $F$-signature.
\begin{remark}
Let $(R,\fm,k)$ be a local ring and let $0\neq f \in \fm$. It is natural to expect that for sufficiently small perturbations $\varepsilon \in \fm^N$ with $N \gg 0$, the quotient rings $R/(f)$ and $R/(f+\varepsilon)$ share similar properties. In particular, if $R$ is $\QQ$-Gorenstein and of prime characteristic $p>0$, then for every $\delta>0$ there exists $N$ such that for all $\varepsilon \in \fm^N$,
\[
\left| s(R/(f)) - s(R/(f+\varepsilon)) \right| < \delta,
\]
see \cite{DeStefaniSmirnov2020,MaQuySmirnov2020}. This result illustrates the $\fm$-adic continuity of the $F$-signature under small perturbations of the defining parameter.
\end{remark}
We briefly review the definition of $\QQ$-Gorenstein ring and $F$-purity of a pair $(R,\Delta)$.
\begin{definition}Let $(R, \mathfrak{m}, k)$ be a Noetherian local ring that is $(S_2)$ and $(G_1)$. A \emph{Weil divisor $D$} is an (integral) divisor of $\operatorname{Spec}(R)$ such that $R(D)_\mathfrak{p} \cong R_\mathfrak{p}$ for all height~1 prime ideals $\mathfrak{p} \in \operatorname{Spec}(R)$. If $D$ is a Weil divisor of $\operatorname{Spec}(R)$, then $R(D)$ denotes the corresponding \emph{reflexive} fractional ideal of $R$. A divisor $D$ is called \emph{principal} if $R(D) \cong R$. Equivalently, there exists a nonzero divisor $a$ in $R$ such that $D = \operatorname{div}(a)$ and 
$R(D) = R(\operatorname{div}(a)) = R \cdot \frac{1}{a}.$
We say that a Weil divisor $D$ is \emph{effective}, and write $D \geq 0$ if $R \subseteq R(D)$. A $\mathbb{Q}$-divisor $\Delta = \sum a_i D_i$ is a finite $\mathbb{Q}$-linear combination of Weil divisors $D_i$ of $\operatorname{Spec}(R)$. A $\mathbb{Q}$-divisor is called \emph{$\mathbb{Q}$-Cartier} if there exists a natural number $n>0$ such that $n\Delta$ is principal. The \emph{index} of a $\mathbb{Q}$-Cartier $\mathbb{Q}$-divisor $\Delta$, is the smallest positive integer $n$ such that $n\Delta$ is principal. A \emph{canonical divisor} of $\operatorname{Spec}(R)$ is a Weil divisor $K_R$ such that $R(K_R)$ is a canonical module of $R$. We say that $R$ is $\mathbb{Q}$-Gorenstein if $K_R$ is a $\mathbb{Q}$-Cartier divisor.

Suppose that $f \in R$ is a nonzero divisor so that $R/(f)$ enjoys the property of being $(S_2)$ and $(G_1)$. Let $D$ be a Weil divisor of $\operatorname{Spec}(R)$ with components disjoint from those of $\operatorname{div}(f)$. If $(R(D)/fR(D))_\mathfrak{p} \cong (R/(f))_\mathfrak{p}$ for all height~1 prime ideals $\mathfrak{p}$ of $R/(f)$, then we write $D|_{R/(f)}$ to be the restriction of $D$ to $R/(f)$. In particular, if $( - )^{**}$ denotes reflexification with respect to $R/(f)$, then
\[
    \left( \frac{R(D)}{fR(D)} \right)^{**} \cong (R/(f))(D|_{R/(f)}).
\]
If $\Delta$ is a $\mathbb{Q}$-divisor, $n \neq 0$ so that $n\Delta$ is an integral Weil divisor on $R$, and $(n\Delta)|_{R/(f)}$ is a Weil divisor of $R/(f)$, then we let $\Delta|_{R/(f)} = \frac{1}{n} (n\Delta)|_{R/(f)}.$    
\end{definition}

\begin{definition}Let $(R, \mathfrak{m}, k)$ be an $F$-finite local ring of prime characteristic $p>0$ that is $(S_2)$ and $(G_1)$, and let $\Delta \geq 0$ be an effective $\mathbb{Q}$-divisor. The \emph{$e$th splitting ideal} of the pair $(R, \Delta)$ is the ideal
\[
    I_e(R, \Delta) = \{x \in R \mid R\xrightarrow{\cdot F^e_* x} F^e_* R(\lceil(p^e - 1) \Delta\rceil) \text{ does not split}\}.
\]
We say that the pair $(R, \Delta)$ is $F$-pure if there exists $e \in \mathbb{N}$ so that $I_e(R, \Delta) \neq R$.
\end{definition}
\begin{remark}
    If $(p^e-1)\Delta$ is integral, then $(p^e-1)\Delta=\lceil(p^e - 1) \Delta\rceil$.
\end{remark}

Fedder's Criterion describes an explicit criterion for a hypersurface to be $F$-pure, and the methods of associated proof are a tool for describing the splitting ideals of a hypersurface as quotients of ideals of the ambient regular ring. The proof of the Inversion of Adjunction of $F$-purity theorem \cite[Theorem~A]{PolstraSimpsonTucker2025} also provides an explicit description of the splitting ideals of a pair restricted to a hypersurface under minimal hypotheses.
\begin{remark}\label{rem:2.12}
    Let $(R,\fm,k)$ be a $d$-dimensional local ring and $M$ be a finitely generated $R$-module. Then
    \begin{enumerate}[label=(\roman*)]
        \item If $\eta:M\to M^{**}$ is an isomorphism in codimension 1, where
        \[ M^{**}\coloneqq\Hom_{R}\Bigl(\Hom_{R}(M,R),R\Bigr),\] then $H^d_{\fm}(M)\cong H^d_{\fm}(M^{**})$.
        Indeed, let $K = \ker(\eta)$ and $C = \coker(\eta)$. Then we have an exact sequence
\[
0 \longrightarrow K \longrightarrow M \xrightarrow{\eta} M^{**} \longrightarrow C \longrightarrow 0.
\]
which give rise to the short exact sequences:
\begin{equation}
0 \longrightarrow K \longrightarrow M \longrightarrow L  \longrightarrow 0.
\tag{2.12.1}
    \label{eq:2.12.1}
\end{equation}
and
\begin{equation}
0 \longrightarrow L \longrightarrow M^{**} \longrightarrow C  \longrightarrow 0.
\tag{2.12.2}
    \label{eq:2.12.2}
\end{equation}
From (\ref{eq:2.12.1}), we have
\[
 H^d_{\mathfrak m}(K)\longrightarrow H^d_{\mathfrak m}(M)  \longrightarrow H^d_{\mathfrak m}(L) \longrightarrow 0.
\]
Since $\eta$ is an isomorphism in codimension $1$, we have $\dim K \le d-2$, and hence
$H^d_{\fm}(K)=0$. Therefore,
\[
H^d_{\fm}(M) \cong H^d_{\fm}(L).
\]
Similarly, from (\ref{eq:2.12.2}), we obtain
\[
 H^{d-1}_{\mathfrak m}(C)\longrightarrow H^d_{\mathfrak m}(L)  \longrightarrow H^d_{\mathfrak m}(M^{**}) \longrightarrow H^d_{\mathfrak m}(C) \longrightarrow 0.
\]
Again, since $\dim C \le d-2$, we have $H^{d-1}_{\fm}(C)=H^d_{\fm}(C)=0$, and hence
\[
H^d_{\fm}(L) \cong H^d_{\fm}(M^{**}).
\]
Thus,
\[
H^d_{\fm}(M) \cong H^d_{\fm}(M^{**}).
\]
        \item If $N$ is an $R$-module, then $M\otimes_{R}H^d_{\fm}(N)\cong H^d_{\fm}(M\otimes_{R}N).$ Indeed, top local cohomology is the cokernal of a \v{C}ech complex, and tensor product is right exact.
    \end{enumerate}
\end{remark}  
\begin{theorem}\label{thm:2.13}Let $(R,\mathfrak{m},k)$ be an $F$-finite, $(S_2)$ and $(G_1)$ local ring of prime characteristic $p>0$. Let $\Delta\geq0$ be a $\mathbb{Q}$-divisor on $\operatorname{Spec}(R)$ such that $\Delta + K_R$ is a $\mathbb{Q}$-Cartier divisor and $(p^e-1)\Delta$ is an integral Weil divisor for all $e \gg 0$ and divisible. Let $f \in R$ be a nonzero divisor of $R$ so that the components of $\Delta$ are disjoint from the components of $\operatorname{div}(f)$, $R/(f)$ is $(S_2)$ and $(G_1)$, and $\Delta|_{R/(f)}$ is a $\mathbb{Q}$-divisor of \ $\operatorname{Spec}(R/(f))$. Then there exists an $e_0 \in \mathbb{N}$ so that for all $t \geq 1$,
\[
I_{t e_0} \left( R/(f), \Delta|_{R/(f)} \right) = \frac{(I_{t e_0} (R, \Delta) :_R f^{p^{te_0}-1})}{(f)}.\]
\end{theorem}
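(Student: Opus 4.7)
The plan is to establish this via a Fedder-type description of the splitting ideals, using the $\QQ$-Gorenstein hypothesis to identify the relevant $\Hom$ modules with cyclic $F^e_*R$-modules, and then to track how these identifications descend from $R$ to $R/(f)$ through adjunction. To begin, I would choose $e_0$ large and divisible enough that $(p^{e_0}-1)\Delta$ is an integral Weil divisor and $(p^{e_0}-1)(K_R + \Delta)$ is principal, say equal to $\operatorname{div}(u)$ for some $u\in R$. Since $p^{e_0}-1 \mid p^{te_0}-1$, for every $t\geq 1$ the divisor $(p^{te_0}-1)(K_R + \Delta)$ remains principal with an explicit generator (a Frobenius-iterated product of $u$) and $(p^{te_0}-1)\Delta$ remains integral, so it suffices to run the same argument with $e_0$ replaced by $te_0$ throughout.

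Using the $(S_2)$ and $(G_1)$ hypotheses together with reflexivity, I would set up the canonical isomorphism
\[
\Hom_R\bigl(F^{e_0}_* R(\lceil(p^{e_0}-1)\Delta\rceil),\, R\bigr) \;\cong\; F^{e_0}_* R\cdot \phi_\Delta
\]
as a cyclic $F^{e_0}_*R$-module, where the generator $\phi_\Delta$ is built from a generator of $\Hom_R(F^{e_0}_*\omega_R,\omega_R)\cong F^{e_0}_*R$ twisted by $u$. Combined with Proposition 2.2 and Corollary 2.3, this yields the equivalence $x\in I_{e_0}(R,\Delta)$ if and only if $\phi_\Delta(F^{e_0}_*xR)\subseteq \fm$.

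The adjunction step is the heart of the argument. Since the components of $\Delta$ are disjoint from $\operatorname{div}(f)$ and both $R$ and $R/(f)$ are $(S_2)$ and $(G_1)$, one has $K_{R/(f)} = (K_R + \operatorname{div}(f))|_{R/(f)}$, hence
\[
(p^{e_0}-1)\bigl(K_{R/(f)} + \Delta|_{R/(f)}\bigr) \;=\; \operatorname{div}\bigl(uf^{p^{e_0}-1}\bigr)\big|_{R/(f)}.
\]
This provides an analogous cyclic $\Hom$-structure on the $R/(f)$ side with canonical generator $\bar\phi$ satisfying $\bar\phi(F^{e_0}_*\bar x) = \overline{\phi_\Delta(F^{e_0}_*xf^{p^{e_0}-1})}$ for any lift $x\in R$ of $\bar x\in R/(f)$. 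Consequently
\[
\bar x\in I_{e_0}\bigl(R/(f),\Delta|_{R/(f)}\bigr)\iff xf^{p^{e_0}-1}\in I_{e_0}(R,\Delta)\iff x\in\bigl(I_{e_0}(R,\Delta):_R f^{p^{e_0}-1}\bigr),
\]
which, reduced modulo $(f)$, is the claimed equality.

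The main obstacle will be making the descent of the $\Hom$-generator rigorous in the $(S_2),(G_1)$ setting, since neither ring is assumed Cohen-Macaulay. The recipe is to verify the cyclic identification and the compatibility $\bar\phi(\,\cdot\,) = \overline{\phi_\Delta(\,\cdot\, f^{p^{e_0}-1})}$ first at height-$1$ primes of $R/(f)$, where the rings become DVRs and adjunction is classical, and then to extend globally by $S_2$-reflexivity in the spirit of Remark 2.12. Careful tracking of how the factor $f^{p^{e_0}-1}$, the source of the colon ideal in the final formula, arises precisely from the $(p^{e_0}-1)\operatorname{div}(f)$ correction in the adjunction formula is the delicate bookkeeping piece.
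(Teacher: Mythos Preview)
Your approach is the Fedder/Schwede trace-map route: identify $\Hom_R(F^{e_0}_*R((p^{e_0}-1)\Delta),R)$ as a cyclic $F^{e_0}_*R$-module with generator $\phi_\Delta$, then show the generator on $R/(f)$ is $\phi_\Delta(\,\cdot\,f^{p^{e_0}-1})$ reduced mod $f$. The paper instead argues by Matlis duality and local cohomology: it builds the commutative diagram of short exact sequences obtained from $0\to R\xrightarrow{f}R\to R/(f)\to 0$ after twisting and pushing forward, lands in a diagram of maps $H^{d-1}_\fm(R/(f)(K_{R/(f)}))\to F^{te_0}_*H^{d-1}_\fm(\cdots)$ versus $H^d_\fm(R(K_R))\to F^{te_0}_*H^d_\fm(\cdots)$, and uses that both top-degree local cohomology modules are essential extensions of $k$ to equate injectivity of the left and middle vertical maps. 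So the two proofs are genuinely dual to one another; yours is more algebraic and constructive, the paper's is cohomological and avoids ever naming a generator of the $\Hom$.

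That last point is exactly where your outline has a gap. You assume you can choose $e_0$ with $(p^{e_0}-1)(K_R+\Delta)=\operatorname{div}(u)$ principal. The hypothesis only says $K_R+\Delta$ is $\QQ$-Cartier; writing its index as $p^a m$ with $\gcd(p,m)=1$, you can arrange $m\mid p^{e_0}-1$ but never $p^a\mid p^{e_0}-1$ when $a>0$. In that case $(p^{e_0}-1)(K_R+\Delta)$ has index $p^{e_1}>1$, and by Grothendieck duality $\Hom_R(F^{e_0}_*R((p^{e_0}-1)\Delta),R)\cong F^{e_0}_*R\bigl(-(p^{e_0}-1)(K_R+\Delta)\bigr)$ is a nontrivial rank-one reflexive $F^{e_0}_*R$-module, hence not cyclic, and there is no $\phi_\Delta$ to speak of. The paper explicitly allows this residual $p$-power index and sidesteps the problem because its local-cohomology diagram uses the module $R(\Delta_{te_0}+K_R)$ directly without ever needing it to be free. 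Your argument as written proves the theorem only under the extra hypothesis that the index of $K_R+\Delta$ is prime to $p$; to cover the general case you would need either to pass to an index-one cover or to rework the compatibility $\bar\phi(\,\cdot\,)=\overline{\phi_\Delta(\,\cdot\,f^{p^{e_0}-1})}$ as a statement about rank-one reflexive $\Hom$-sheaves rather than single generators. (A smaller point: at height-$1$ primes of $R/(f)$ you only get $1$-dimensional Gorenstein local rings, not DVRs, since $R/(f)$ is assumed $(G_1)$ but not $(R_1)$.)
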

\begin{proof}The pair $(R, \Delta + \operatorname{div}(f))$ is $F$-pure if and only if the pair $(R/(f), \Delta|_{R/(f)})$ is $F$-pure by \cite[Theorem~A]{PolstraSimpsonTucker2025}. Therefore,
$I_e(R/(f), \Delta|_{R/(f)}) = R/(f) \text{ for all } e \in \mathbb{N}$
if and only if
$I_e(R, \Delta + \operatorname{div}(f))=R \text{ for all } e \in \mathbb{N}.$ We therefore may assume without loss of generality that the pairs $(R, \Delta + \operatorname{div}(f))$ and $(R/(f), \Delta|_{R/(f)})$ are $F$-pure.

Our assumptions imply that there exists an $e_0 \in \mathbb{N}$ so that $(p^{e_0}-1)(\Delta + K_R)$ is $\mathbb{Q}$-Cartier of index $p^{e_1}$ for some $e_1\in \mathbb{N}$. Therefore, for every $t\in \mathbb{N}$,
\[(p^{te_0} -1)(\Delta + K_R)=\frac{p^{te_0}-1}{p^{e_0}-1}(p^{e_0}-1)(\Delta+K_R)\] is $\mathbb{Q}$-Cartier of index $p^{e_1}.$\\
To ease notation, for each $e\in \mathbb{N}$, let
\[
\Delta_e \coloneqq (p^e - 1)(\Delta + K_R) \quad \text{and} \quad 
\bar{\Delta}_e \coloneqq (p^e - 1)(\Delta|_{R/(f)} + K_{R/(f)}).
\] 
By \cite[Lemma~4.1]{PolstraSimpsonTucker2025}, we have
\begin{equation}
    \frac{R((p^{te_0}-1)\Delta)}{fR((p^{te_0}-1)\Delta)}\cong R/fR(((p^{te_0}-1)\Bar{\Delta})).
    \tag{2.13.1}
    \label{eq:2.13.1}
\end{equation}

Moreover, we have
\begin{equation}
  H^{d-1}_\fm(R/fR(\Bar{\Delta}_{te_0}+K_R))=\text{Ker}\left( H^d_{\fm}(R(\Delta_{te_0}+K_R))\xrightarrow{\cdot f} H^d_{\fm}(R(\Delta_{te_0}+K_R))\right).
  \tag{2.13.2}
  \label{eq:2.13.2}
\end{equation}

Let $g\in I_{t e_0}\left( R, \Delta\right)$, this implies the map
$R\xrightarrow{\cdot F^{te_0}_{*} g} F^{te_0}_* R((p^{te_0} - 1)\Delta)$ does not split. That is, $H^d_\fm(R(K_R)) \xrightarrow{\cdot F^{te_0}_{*}g} F^{te_0}_{*} H^d_\fm (R(\Delta_{te_0}+K_R))$ is not injective. To prove that $g\in I_{t e_0} \left( R/(f), \Delta|_{R/(f)} \right)$, consider the diagram which is commutative by (\ref{eq:2.13.1}),
\begin{equation}
\begin{tikzcd}
0 \arrow[r]  & R \arrow[r, "\cdot f"] \arrow[d, "\cdot F^{te_{0}}_{*}f^{p^{t{e_0}}-1}g"] & R \arrow[r] \arrow[d, " F^{te_0}_{*}g"] & R/(f)\arrow[r] \arrow[d, "F^{te_0}_{*}g"]  & 0  \\
0 \arrow[r] &F^{te_0}_{*} R(\Delta_{te_0}) \arrow[r, "\cdot F^{te_0}_{*}f"] & F^{te_0}_{*}R(\Delta_{te_0}) \arrow[r]& F^{te_0}_{*}R/(f)(\Bar{\Delta}_{te_0}) \arrow[r] & 0 
\end{tikzcd}
\tag{2.13.3}
\label{eq:2.13.3}
\end{equation}
which induces the following commutative diagram by (\ref{eq:2.13.2}),
{\scriptsize\begin{equation}
\begin{tikzcd}[row sep=large, column sep=3.5mm]
\quad 0 \arrow[r]& H^{d-1}_\fm (R/(f)(K_{R/(f)})) \arrow[r] \arrow[d, "F^{te_{0}}_{*}g"]& H^{d}_\fm(R(K_R)) \arrow[r, "\cdot f"] \arrow[d, "\cdot F^{te_{0}}_{*}f^{p^{t{e_0}}-1}g"]& H^{d}_\fm(R(K_R))\arrow[r] \arrow[d, "F^{te_0}_{*}g"]& 0 \\0 \arrow[r]&F^{te_0}_{*}H^{d-1}_{\fm} (R/(f)(\Bar{\Delta}_{te_0}+K_{R/(f)})) \arrow[r]& F^{te_0}_{*}H^d_{\fm}(R(\Delta_{te_0}+K_R)) \arrow[r,"\cdot F^{te_0}_*f"]& F^{te_0}_{*}H^d_{\fm}(R({\Delta}_{te_0}+K_R)) \arrow[r]& 0
\end{tikzcd}
\tag{2.13.4}
\label{eq:2.13.4}
\end{equation}
}
The local cohomology modules $H^{d-1}_{\mathfrak{m}}(R/(f)(K_{R/(f)}))$ and $H^{d}_{\mathfrak{m}}(R(K_R))$ are essential extensions of the residue field.
Therefore, the left vertical map is injective if and only if the middle vertical map is injective because of the following isomorphism of the vertical maps in (\ref{eq:2.13.4}) by Remark~\hyperref[rem:2.12]{\ref*{rem:2.12}}:

\begin{align*}
\Big(H^{d-1}_{\mathfrak{m}}(R/(f)(K_{R/(f)}))\xrightarrow{\cdot F^{te_0}_{*}g} & F^{te_0}_{*}H^{d-1}_{\mathfrak{m}}(R/(f)(\Bar{\Delta}_{te_0}+K_{R/(f)}))\Big)\\ & \cong
\Big(R/(f)\xrightarrow{\cdot F^{te_0}_{*}g} F^{te_0}_{*}R/(f)(\Bar{\Delta}_{te_0})\Big)\otimes_{R/(f)}H^{d-1}_{\mathfrak{m}}(R/(f)(K_{R/(f)}))
\end{align*}
and 
\begin{align*}
\left(H^{d}_{\mathfrak{m}}(R(K_R))\xrightarrow{\cdot F^{te_0}_{*}f^{p^{te_0}-1}g}F^{te_0}_{*}H^{d}_{\mathfrak{m}}(R(\Delta_{te_0}+K_R))\right)\\& \cong \left(R\xrightarrow{\cdot F^{te_0}_{*}f^{p^{te_0}-1}g}F^{te_0}_{*}R(\Delta_{te_0})\right)\otimes _{R} H^{d}_{\mathfrak{m}}(R(K_R)).
\end{align*}
Suppose $0 \neq g\in I_{te_0}(R/(f),\Delta|_{R/(f)}))$, then the map \[
      R/(f) \xrightarrow{\cdot F^{te_0}_{*}g} F^{te_0}_{*} R/(f)((p^{te_0}-1)\Delta|_{R/(f)}) \, \text{does not split.}
      \]   
      This is equivalent to saying that the map \[
      R \xrightarrow{\cdot F^{te_0}_{*}f^{p^{te_0}-1}g } F^{te_0}_{* }R((p^{te_0}-1)\Delta) \text{ does not split}\]
      by diagram (\ref{eq:2.13.3}) and (\ref{eq:2.13.4}). This condition holds true if and only if $gf^{p^{te_0}-1}\in I_{t e_0}\left(R,\Delta\right)$, which is true if and only if $g\in (I_{te_0}(R,\Delta):_Rf^{p^{te_0}-1})$, for all $t\in\mathbb{N}$.
 \\Thus, 
 \[
    I_{t e_0} \left( R/(f), \Delta|_{R/(f)} \right) = \frac{(I_{t e_0} (R,\Delta):_R f^{p^{te_0}-1})}{(f)}.
\]
\end{proof}
\begin{lemma}\label{lemma:2.14} Let $(R,\mathfrak{m},k)$ be a local Cohen-Macaulay, $F$-finite, $F$-pure, $(S_2)$ and $(G_1)$ ring of prime characteristic $p>0$, and let $K_R$ be a canonical divisor of $\operatorname{Spec}(R)$. Let $E$ be a $p$-power torsion Weil divisor of $\operatorname{Spec}(R)$. Then $R(E + K_R)$ is a Cohen-Macaulay module.
\end{lemma}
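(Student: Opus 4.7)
The plan is to realize $R(E+K_R)$ as a direct summand of the Cohen--Macaulay $R$-module $F^{e}_{*}\omega_R$ for a suitable $e$, and then invoke the fact that direct summands of Cohen--Macaulay modules are Cohen--Macaulay.

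First I would use the $p$-power torsion hypothesis on $E$ to choose $e \in \mathbb{N}$ so that $p^{e}E$ is principal; this yields $R(p^{e}E+K_R) \cong R(K_R) = \omega_R$ as $R$-modules. Next I would invoke the Grothendieck--duality formula for the finite Frobenius morphism $F^{e}\colon \operatorname{Spec}(R)\to \operatorname{Spec}(R)$ on an $F$-finite $(S_2)(G_1)$ ring, which for any Weil divisor $D$ reads
\[
\Hom_R\bigl(F^{e}_{*}R,\, R(D)\bigr) \;\cong\; F^{e}_{*}R\bigl(p^{e}D - (p^{e}-1)K_R\bigr).
\]
Specializing to $D = E+K_R$ and simplifying using that $p^{e}E$ is principal gives
\[
\Hom_R\bigl(F^{e}_{*}R,\, R(E+K_R)\bigr) \;\cong\; F^{e}_{*}R(p^{e}E + K_R) \;\cong\; F^{e}_{*}\omega_R.
\]

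Next I would exploit $F$-purity: because $R$ is $F$-finite and $F$-pure, the inclusion $R \hookrightarrow F^{e}_{*}R$ splits as $R$-modules. Applying $\Hom_R(-, R(E+K_R))$ then turns this into a split surjection $F^{e}_{*}\omega_R \twoheadrightarrow R(E+K_R)$, exhibiting $R(E+K_R)$ as a direct summand of $F^{e}_{*}\omega_R$. It remains to note that $F^{e}_{*}\omega_R$ is Cohen--Macaulay as an $R$-module: $\omega_R$ is maximal Cohen--Macaulay since $R$ is Cohen--Macaulay, and any regular sequence $x_1,\ldots,x_d$ on $\omega_R$ remains regular on $F^{e}_{*}\omega_R$ because the action of $x_i$ there coincides with the action of $x_i^{p^{e}}$ on $\omega_R$, and powers of regular sequences are regular. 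Since direct summands of Cohen--Macaulay modules are Cohen--Macaulay, $R(E+K_R)$ is Cohen--Macaulay.

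The main technical point I expect to verify carefully is the Grothendieck--duality formula for Frobenius in the $(S_2)(G_1)$ setting (rather than the normal setting), together with the twist-compatibility $F^{e}_{*}R \otimes_R R(D) \cong F^{e}_{*}R(p^{e}D)$ after reflexification. Both identifications are first checked on the Gorenstein codimension-one locus (where the modules involved are invertible) and then extended globally because both sides are reflexive and satisfy $(S_2)$ as $R$-modules.
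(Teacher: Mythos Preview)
Your proposal is correct and follows essentially the same strategy as the paper: both arguments exhibit $R(E+K_R)$ as a direct summand of $F^{e}_{*}\omega_R$ via the Frobenius splitting together with Grothendieck duality for Frobenius, and then conclude Cohen--Macaulayness from that of $F^{e}_{*}\omega_R$. The only cosmetic difference is that the paper first tensors the splitting $F^{e}_{*}R \cong R \oplus \cdots$ by $R(-E)$ and reflexifies (so that $R(-E)$ appears as a summand of $F^{e}_{*}R$) before applying $\Hom_R(-,R(K_R))$, whereas you apply $\Hom_R(-,R(E+K_R))$ directly to the splitting and invoke the twisted duality formula $\Hom_R(F^{e}_{*}R,R(D))\cong F^{e}_{*}R(p^{e}D-(p^{e}-1)K_R)$ in one step; your route is slightly more streamlined but not substantively different.
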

\begin{proof}Assume that $p^eE \sim 0$, then $-p^eE\sim0.$
   Since $R$ is $F$-pure, the Frobenius map 
   \[R\to F^e_{*}R\] splits as a $R$-linear map. Thus, 
   \[F^e_{*}R\cong R\oplus\cdots.\]
Tensoring with $R(- E)$ gives,
\begin{equation}
F^e_{*}R\otimes_{R}R(-E)\cong R\otimes_{R}R(-E)\oplus\cdots\cong R(-E)\oplus\cdots
\tag{2.14.1}
\label{eq:2.14.1}
\end{equation}
Let $(-)^{*}=\Hom_{R}(-,R)$, then $(F^{e}_{*}R\otimes R(-E))^{**}\cong F^e_{*}R(-p^eE)$, and $R(-E)^{**}\cong R(-E)$.
But, $F^e_{*}R\otimes_{R}R(-E)\cong F^e_{*}R(-p^eE)\cong F^e_{*}R$. Thus, \[F^e_{*}R\cong R(-E)\oplus\cdots.\] 

Now since $R(-E)$ is a direct summand of Cohen-Macaulay module $F^e_{*}R$, so it is Cohen-Macaulay. Applying $\Hom_{R}(- ,R(K_R))$ in (\ref{eq:2.14.1}) gives,
\begin{align*}
\Hom_{R}(F^e_{*}R, R(K_R)) &\cong \Hom_{R}(R(-E),R(K_R)) \oplus \cdots \\
&\cong \Hom_{R}(R,R(E+K_R)) \oplus \cdots \\
&\cong R(E+K_R) \oplus \cdots
\end{align*}

But, $\Hom_{R}(F^e_{*}R, R(K_R))\cong F^e_{*}R(K_R)$, thus, $F^e_{*}R(K_R)\cong R(E+K_R)\oplus\cdots$. Therefore, $R(E+K_R)$ is a direct summand of Cohen-Macaulay module $F^e_{*}R(K_R)$, thus $R(E+K_R)$ is Cohen-Macaulay.  
\end{proof}
\begin{proposition}\label{proposition:2.15}
    Let $(R, \mathfrak{m}, k)$ be a local Cohen-Macaulay, $F$-finite, $F$-pure, $(S_2)$ and $(G_1)$ ring of prime characteristic $p>0$, and let $K_R$ be a canonical divisor of \ $\operatorname{Spec}(R)$. Let $\Delta \geq 0$ be an effective $\mathbb{Q}$-divisor so that $\Delta + K_R$ is $\mathbb{Q}$-Cartier and for all $e\gg0$ and divisible, $(p^e-1)\Delta$ is integral. Let $(\underline{x})$ be a system of parameters of $R$ and $u\in R(K_R)$ a socle generator modulo $(\underline{x})$ such that 
$((\underline{x}) R(K_R) :_{R(K_R)} \mathfrak{m}) = (\underline{x}) R(K_R) + Ru.$
For each $e \in \mathbb{N}$, let $\Delta_e = (p^e - 1)(\Delta + K_R)$. Then there exists an $e_0 \in \mathbb{N}$ such that for all $t\geq 1$,
\[I_{t e_0}(R,\Delta)=\left((\underline{x})^{[p^{t e_0}]} R(\Delta_{t e_0} + K_R) :_{R} u^{p^{t e_0}}\right).\]
\end{proposition}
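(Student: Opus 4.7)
The Cohen--Macaulay and $\QQ$-Cartier hypotheses let me choose $e_0 \in \NN$ so that $(p^{e_0}-1)(\Delta+K_R)$ is both integral and Cartier, i.e.\ principal; this uses that $\Delta+K_R$ is $\QQ$-Cartier and that $(p^e-1)\Delta$ is integral for all sufficiently divisible $e$. For every $t \geq 1$,
\[
    \Delta_{te_0} = (p^{te_0}-1)(\Delta+K_R) = \tfrac{p^{te_0}-1}{p^{e_0}-1}\cdot(p^{e_0}-1)(\Delta+K_R)
\]
is then a positive integer multiple of a principal divisor, hence $\Delta_{te_0}\sim 0$ and $R(\Delta_{te_0}+K_R)\cong R(K_R)$. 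In particular, either directly or by Lemma 2.13 applied to $E=\Delta_{te_0}$, the module $R(\Delta_{te_0}+K_R)$ is Cohen--Macaulay and $\underline{x}$ is a regular sequence on it.

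The core of the argument is the classical dictionary between non-splitting of an $R$-linear map $R\to M$ and the socle of $E_R(k)$ being killed after tensoring with $M$. By Theorem 2.1 and Proposition 2.2, $g\in I_{te_0}(R,\Delta)$ if and only if the map $\phi_g:R\to F^{te_0}_*R((p^{te_0}-1)\Delta)$ with $\phi_g(1)=F^{te_0}_*g$ becomes zero on the socle of $E_R(k)$ after the functor $E_R(k)\otimes_R -$ is applied. Since $R$ is Cohen--Macaulay with canonical module $R(K_R)$, I identify $E_R(k)=H^d_\fm(R(K_R))$, with socle represented by the class $[u/\underline{x}]$. I would then identify the target $E_R(k)\otimes_R F^{te_0}_*R((p^{te_0}-1)\Delta)$, via the projection formula for Frobenius and Remark 2.12 (to pass to reflexive hulls inside top local cohomology), with $F^{te_0}_* H^d_\fm(R(\Delta_{te_0}+K_R))$. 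Chasing through the identifications, the induced map sends $[u/\underline{x}]$ to $F^{te_0}_*\bigl[gu^{p^{te_0}}/\underline{x}^{p^{te_0}}\bigr]$; the Frobenius power $u^{p^{te_0}}$ of the socle representative appears because the natural map $R(K_R)\to F^{te_0,*}R(K_R)\cong R(p^{te_0}K_R)$ sends $w\mapsto w^{p^{te_0}}$, and the denominator becomes $\underline{x}^{p^{te_0}}$ because the $R$-action on $F^{te_0}_*(-)$ is twisted by Frobenius.

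Once the induced map is pinned down in this form, Cohen--Macaulayness of $R(\Delta_{te_0}+K_R)$ with regular sequence $\underline{x}$ gives that $[gu^{p^{te_0}}/\underline{x}^{p^{te_0}}]$ vanishes in $H^d_\fm(R(\Delta_{te_0}+K_R))$ if and only if $gu^{p^{te_0}}\in(\underline{x})^{[p^{te_0}]}R(\Delta_{te_0}+K_R)$. Combining the equivalences yields
\[
    g\in I_{te_0}(R,\Delta)\iff gu^{p^{te_0}}\in(\underline{x})^{[p^{te_0}]}R(\Delta_{te_0}+K_R)\iff g\in\bigl((\underline{x})^{[p^{te_0}]}R(\Delta_{te_0}+K_R):_R u^{p^{te_0}}\bigr),
\]
which is the desired equality.

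The step I expect to be most delicate is the projection-formula identification in the middle paragraph: justifying the isomorphism $F^{te_0}_*R((p^{te_0}-1)\Delta)\otimes_R R(K_R)\cong F^{te_0}_*R(\Delta_{te_0}+K_R)$ after reflexification, verifying that under this identification $\phi_g\otimes R(K_R)$ really becomes $w\mapsto F^{te_0}_*(gw^{p^{te_0}})$, and then checking that on local cohomology the twist in the $R$-action on $F^{te_0}_*(-)$ converts a denominator $\underline{x}$ into a denominator $\underline{x}^{p^{te_0}}$ on the underlying module. Once this bookkeeping is in hand, the Cohen--Macaulay colon-ideal translation delivers the formula.
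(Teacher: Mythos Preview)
Your approach is essentially identical to the paper's: both reduce non-splitting to non-injectivity on $H^d_\fm(R(K_R))$, identify the target via the projection formula with $F^{te_0}_* H^d_\fm(R(\Delta_{te_0}+K_R))$, track the socle class to $F^{te_0}_*[gu^{p^{te_0}}/\underline{x}^{p^{te_0}}]$, and use Cohen--Macaulayness of $R(\Delta_{te_0}+K_R)$ (injectivity of the direct-limit transition maps) to convert vanishing into the colon-ideal membership. One correction to your setup: you cannot in general arrange $(p^{e_0}-1)(\Delta+K_R)$ to be \emph{principal}, only $p$-power torsion---if the index of $\Delta+K_R$ is divisible by $p$ then no $p^{e_0}-1$ is a multiple of it---so your claims $\Delta_{te_0}\sim 0$ and $R(\Delta_{te_0}+K_R)\cong R(K_R)$ need not hold; the paper (and your own hedge) handles this via Lemma~2.14 (you cite it as Lemma~2.13), which gives Cohen--Macaulayness of $R(E+K_R)$ for $E$ merely $p$-power torsion, and that is all the argument actually needs.
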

\begin{proof} There exist $e_0,e_1$ so that for all $t
\in \NN$, $(p^{te_0}-1)(\Delta+K_R)$ is integral and $p^{e_1}(p^{te_0}-1)(\Delta+K_R)\sim 0.$ Let $g\in I_{t e_0}(R,\Delta)$, this implies that the map, 
\[R\xrightarrow{ \cdot F^{te_0}_* g} F^{te_0}_* R((p^{te_0} - 1)\Delta_{te_0})\] does not split. Equivalently, the map
\[H^d_{\mathfrak{m}} (R(K_R)) \to F^{te_0}_*(R (p^{te_0}-1) \Delta_{te_0}) \otimes_{R} H^d_\fm (R(K_R))\] is not injective. By Remark~\hyperref[rem:2.12]{\ref*{rem:2.12}},
\begin{align*}
  F^{te_0}_*(R (p^{te_0}-1) \Delta_{te_0}) \otimes_{R} H^d_\fm (R(K_R)) & \cong H^d_{\fm}(F^{te_0}_{*}R((p^{te_0}-1)\Delta_{te_0})\otimes_{R} R(K_R))\\ & \cong  H^d_{\fm}(F^{te_0}_{*}R((p^{te_0}-1)\Delta_{te_0}+p^{te_0}K_R))\\ & \cong H^d_{\fm}(F^{te_0}_{*}R(\Delta_{te_0}+K_R))\\ & \cong F^{te_0}_{*}H^d_{\fm}(R(\Delta_{te_0}+K_R)).
\end{align*}
Also,
\[
H^d_\fm (R(K_R))\cong \varinjlim_s \left(\frac{R(K_R)}{(\underline{x}^s)R(K_R)}\xrightarrow{\cdot x_1x_2\cdots x_d}\frac{R(K_R)}{(\underline {x}^{s+1})R(K_R)}\right),
\]
{\footnotesize
\[
F^{te_0}_{*}H^d_{\fm}(R(\Delta_{te_0}+K_R)) \cong F^{te_0}_{*} \varinjlim_s\left(\frac{R(\Delta_{te_0}+K_R)}{(\underline{x}^{sp^{te_0}})R(\Delta_{te_0}+K_R)}\xrightarrow{\cdot (x_1x_2\cdots x_d)^{p^{te_0}}}\frac{R(\Delta_{te_0}+K_R)}{(\underline {x}^{(s+1)p^{te_0}})R(\Delta_{te_0}+K_R)}\right).
\]
}The maps above in the direct limit system are injective by Lemma~\hyperref[lemma:2.14]{\ref*{lemma:2.14}}.\\
Therefore, the map, 
\[H^d_{\mathfrak{m}} (R(K_R)) \to F^{te_0}_*(R (p^{te_0}-1) \Delta) \otimes_{R} H^d_\fm (R(K_R))\]is not injective if and only if
\[[u+(\underline{x})R(K_R)]\mapsto [gu^{p^{te_0}}+(\underline{x}^{p^{te_0}})R(\Delta_{te_0}+K_R)]=0.\] Thus, $gu^{p^{te_0}} \in (\underline{x}^{p^{te_0}})R(\Delta_{te_0}+K_R)$. Hence, $g \in ((\underline{x})^{[p^{t e_0}]} R(\Delta_{t e_0} + K_R) :_{R} u^{p^{t e_0}}).$
\end{proof}
\begin{remark}If $(R,\fm,k)$ is Gorenstein and $\Delta=0$, then Proposition~\hyperref[proposition:2.15]{\ref*{proposition:2.15}} verifies 
\[I_{e}(R) = \left((\underline{x})^{[p^{e}]} :_{R} u^{p^{e}} \right).\]
See also
\cite[Theorem~11(2), Lemma~12]{Huneke_2002}.
\end{remark}
\section{\texorpdfstring{On $\fm$-adic Continuity of $F$-Splitting Ratio}{On m-adic Continuity of F-Splitting Ratio}}
\label{sec:theorem}
Theorem~\hyperref[theorem:3.1]{\ref*{theorem:3.1}} applies to all $F$-finite Gorenstein local ring of prime characteristics $p>0$. Intrinsically, the theorem plays a key role in the proofs of our main results.
\begin{theorem}\label{theorem:3.1}       
Let $(R,\mathfrak{m},k)$ be an $F$-finite, $F$-pure, Gorenstein  local ring of dimension $d$ of prime characteristic $p>0$. The following are equivalent:
 \begin{enumerate}[label=(\roman*)]
 \item$a_1(R)=[k^{1/p}:k].$
 \item There exists $e\in \mathbb{N}$ such that $a_e(R)=[k^{1/p^e}:k]$.
 \item For all $e\in \mathbb{N}$, $a_e(R)=[k^{1/p^e}:k]$.
     \item $I_1(R)=\fm.$
     \item There exist $e\in \mathbb{N}$ such that $I_e(R)=\fm$. 
     \item For all $e\in \mathbb{N}$, $I_e(R)=\fm$.
     \item The splitting prime ideal $\mathcal{P}(R)$ of $R$ equals $\fm$.
 \end{enumerate}
\end{theorem}
\begin{proof}We first establish the relationship between the splitting numbers $a_e(R)$ and the ideals $I_e(R)$. If $a_e(R)=[k^{1/p^e}:k],$ then $a_e(R)\geq1$ and $I_e(R)\subseteq \fm.$ 
 Recall that
\[
a_e(R)=\dim_k\!\left(\frac{F^e_*R}{F^e_*I_e(R)}\right).
\]
Since $I_e(R)\subseteq \fm$, we have $F^e_*I_e(R)\subseteq F^e_*\fm$, and hence
\[
\dim_k\!\left(\frac{F^e_*R}{F^e_*I_e(R)}\right)
\ge
\dim_k\!\left(\frac{F^e_*R}{F^e_*\fm}\right)
= [k^{1/p^e}:k].
\]
Therefore,
\[
a_e(R) \ge [k^{1/p^e}:k].
\]
Moreover, equality holds if and only if
$F^e_*I_e(R)=F^e_*\fm$, which is equivalent to $I_e(R)=\fm$.
Thus,
\begin{equation}\label{eq:splitting-ideal-equiv}
a_e(R) = [k^{1/p^e}:k] \quad \Longleftrightarrow \quad I_e(R) = \fm.
\tag{3.1.1}
\end{equation}
Now, by Equation~\hyperref[eq:splitting-ideal-equiv]{\ref*{eq:splitting-ideal-equiv}}
\[
\text{(i)} \Longleftrightarrow \text{(iv)}, \qquad
\text{(ii)} \Longleftrightarrow \text{(v)}, \qquad
\text{(iii)} \Longleftrightarrow \text{(vi)}.
\]
Clearly, (vi)$\Longrightarrow$(iv)$\Longrightarrow$(v).

\noindent(v)$\implies$ (iv): Since $R$ is Gorenstein, by \cite[Lemma~3.9]{Schwede2009}, there exist $\phi_{1}\in \Hom_{R}(F_{*}R,R)$ such that $\phi_e\coloneqq\phi_1\circ F_*^{e-1}\phi_{e-1}$ generates $\Hom_{R}(F^{e}_{*}R,R)$ as a $F^e_{*}R$-modules. Suppose that $I_e(R)=\fm$ for some $e$.
Then
\[
\fm=\phi_e(R)\subseteq \phi_1(R)\subseteq\fm,
\]
which implies $I_1(R)=\fm$.\\
\noindent (iv)$\implies$ (vi): Assume $I_1(R)=\fm$. We proceed by induction on $e$. The case $e=1$ is trivial.
Suppose $I_{e-1}(R)=\fm$. For any $x\in\fm$,
\[
\phi_e(F_*^ex)
= \phi_1(F_*\phi_{e-1}(F_*^{e-1}x))
\subseteq \phi_1(F_*\fm)
\subseteq \fm,
\]
so $I_e(R)=\fm$.\\
 \noindent (vi)$\implies$ (vii): If $\PP(R)\neq\fm$, then there exists $g\in\fm\setminus \PP(R)$. By definition of the splitting
prime, there exists $e$ such that the map
\[
R \xrightarrow{F_*^eg} F_*^eR
\]
splits, which implies $g\notin I_e(R)$, contradicting $I_e(R)=\fm$.\\
\noindent (vii)$\implies$ (iv): Since
\[
\PP(R)=\bigcap_{e\ge1} I_e(R)=\fm,
\]
it follows that $I_1(R)=\fm$.
\end{proof}
\begin{corollary}       
Let $(R,\mathfrak{m},k)$ be an $F$-finite, $F$-pure, Gorenstein  local ring of dimension $d$ of prime characteristic $p>0$ and $0\neq f\in \fm$. Suppose $(\underline{x})=(x_1, \dots, x_d)$ is a system of parameters so that $((\underline{x}) :_{R}\fm)=(x,u)$, where $u$ is a socle generator of $R/(x_1, \dots, x_d)$. Then the following are equivalent:
 \begin{enumerate}[label=(\roman*)]
     \item\label{cond:i} $a_1(R/(f)) = [k^{1/p} : k].$
     \item\label{cond:ii} There exists $e\in \mathbb{N}$ such that $a_e(R/(f))=[k^{1/p^e}:k]$.
     \item\label{cond:iii} For all $e\in \mathbb{N}$, $a_e(R/(f))=[k^{1/p^e}:k]$.  \item\label{cond:iv}$\left(((\underline{x})^{[p]}:_{R} u^p):_{R} f^{p-1}\right)=\fm$.
     \item\label{cond:v} There exists $e\in \mathbb{N}$ such that $\left(((\underline{x})^{[p^e]}:_{R} u^{p^e}):_{R} f^{p^{e}-1}\right)=\fm$.
     \item\label{cond:vi} For all $e\in \mathbb{N}$, $\left(((\underline{x})^{[p^e]}:_{R} u^{p^e}):_{R}f^{p^{e}-1}\right)=\fm$. 
     \item\label{cond:vii}  The splitting prime ideal $\mathcal{P}(R/(f))$ of $R/(f)$ equals $\fm/(f)$.
 \end{enumerate}
Moreover, if $(\underline{x})$ is a system of parameters of $R$, conditions (i)-(vii) holds true and if $\varepsilon\in(\underline{x})^{[p]}$, then $\mathcal{P}(R/(f+\varepsilon))=\mathfrak{m}/(f+\varepsilon)$.
\end{corollary}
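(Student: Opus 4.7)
The strategy is to reduce the seven-fold equivalence to Theorem 3.1 applied to $R/(f)$, combined with the colon-ideal formulas supplied by Theorem 2.13 and Proposition 2.15. Note that $R/(f)$ is itself an $F$-finite Gorenstein local ring ($f$ being a non-zero-divisor in the Gorenstein ring $R$, and $F$-finiteness inherited from $R$). To convert between the ``$a_e = [k^{1/p^e}:k]$'' form of (i)--(iii) and the ``$I_e = \fm$'' form appearing in Theorem 3.1, I would use Corollary 2.3 together with the observation that $F^e_*k$ has length $[k^{1/p^e}:k]$ over $R/(f)$; this gives $a_e(R/(f)) = \lambda_{R/(f)}\bigl((R/(f))/I_e(R/(f))\bigr) \cdot [k^{1/p^e}:k]$. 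Thus $a_e(R/(f)) = [k^{1/p^e}:k]$ iff $I_e(R/(f)) = \fm/(f)$. Each of (i)--(iii) then implies $R/(f)$ is $F$-pure, so Theorem 3.1 applied to $R/(f)$ delivers (i) $\Leftrightarrow$ (ii) $\Leftrightarrow$ (iii) $\Leftrightarrow$ (vii). (When $R/(f)$ fails to be $F$-pure, a parallel direct check shows all seven conditions fail together.)

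To link (iv)--(vi) to the preceding equivalences, I would invoke Theorem 2.13 and Proposition 2.15 with $\Delta = 0$. Because $R$ is Gorenstein, $K_R$ is Cartier, so $(p^e - 1)K_R$ is integral and Cartier for every $e$; hence Theorem 2.13 applies with $e_0 = 1$, yielding $I_e(R/(f)) = (I_e(R) :_R f^{p^e - 1})/(f)$ for all $e \geq 1$. Proposition 2.15 supplies $I_e(R) = ((\underline{x})^{[p^e]} :_R u^{p^e})$. Combining these, $I_e(R/(f)) = \fm/(f)$ if and only if $\bigl(((\underline{x})^{[p^e]} :_R u^{p^e}) :_R f^{p^e - 1}\bigr) = \fm$, which furnishes (iv) $\Leftrightarrow$ (i), (v) $\Leftrightarrow$ (ii), and (vi) $\Leftrightarrow$ (iii).

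For the ``moreover'' clause, assume conditions (i)--(vii) hold and $\epsilon \in (\underline{x})^{[p]}$. The key observation is that $(f+\epsilon)^{p-1} \equiv f^{p-1} \pmod{(\underline{x})^{[p]}}$, since every binomial-theorem term containing a positive power of $\epsilon$ lies in $(\underline{x})^{[p]}$. Consequently
\[
\bigl(((\underline{x})^{[p]} :_R u^p) :_R (f+\epsilon)^{p-1}\bigr) = \bigl(((\underline{x})^{[p]} :_R u^p) :_R f^{p-1}\bigr) = \fm,
\]
which is condition (iv) for $f+\epsilon$. Applying the seven-fold equivalence to $f+\epsilon$ then yields $\PP(R/(f+\epsilon)) = \fm/(f+\epsilon)$; the required $F$-purity of $R/(f+\epsilon)$ is automatic since $\fm \neq R$ forces $I_1(R/(f+\epsilon)) = \fm/(f+\epsilon) \neq R/(f+\epsilon)$.

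The main obstacle I anticipate is the bookkeeping needed to re-apply the equivalence to $f+\epsilon$: one must ensure $f+\epsilon$ remains a non-zero-divisor so that $R/(f+\epsilon)$ is Gorenstein and all preceding steps carry over. In the Cohen--Macaulay setting this follows by a standard minimal-prime-avoidance argument, possibly after requiring $\epsilon$ to lie in a sufficiently deep $\fm$-adic neighborhood inside $(\underline{x})^{[p]}$.
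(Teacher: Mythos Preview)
Your proposal is correct and follows the same high-level architecture as the paper: translate the conditions on $a_e(R/(f))$ into conditions on $I_e(R/(f))$, invoke Theorem~3.1 for $R/(f)$, and use the colon-ideal descriptions of Theorem~2.13 and Proposition~2.15 (with $\Delta=0$) to handle (iv)--(vi). Two tactical differences are worth noting. First, for the conversion $a_e(R/(f))=[k^{1/p^e}:k]\Leftrightarrow I_e(R/(f))=\fm/(f)$, you use the length formula of Corollary~2.3 together with $\lambda_{R/(f)}(F^e_*k)=[k^{1/p^e}:k]$, whereas the paper argues directly by producing $[k^{1/p^e}:k]$ distinct splittings of $F^e_*(R/(f))$ from the cyclic generator $\phi_e$ of $\Hom_{R/(f)}(F^e_*(R/(f)),R/(f))$; both are short, and yours is perhaps more conceptual while the paper's is more constructive. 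Second, for the ``moreover'' clause, the paper asserts that (vi) holds for $f+\epsilon$ at \emph{every} $e$ (the justification of which is really the inductive argument appearing later in Theorem~3.3), whereas you verify only (iv) at $e=1$ via the congruence $(f+\epsilon)^{p-1}\equiv f^{p-1}\pmod{(\underline{x})^{[p]}}$ and then bootstrap through the already-established equivalence. Your route here is cleaner and self-contained; the only cost, as you correctly flag, is that re-applying the equivalence requires $f+\epsilon$ to be a non-zero-divisor, a hypothesis neither proof addresses explicitly.
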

\begin{proof}
By Theorem~\hyperref[thm:2.13]{\ref*{thm:2.13}} and Theorem~\hyperref[theorem:3.1]{\ref*{theorem:3.1}} conditions (i)-(vii) are equivalent.

Now assume that $(\underline{x}) = (x_1, \dots, x_d)$ is a system of parameters of $R$ and that conditions (i) through (vii) are satisfied. By (vii),
if $\PP(R/(f))=\fm/(f)$, then by Theorem~\hyperref[thm:2.13]{\ref*{thm:2.13}} and Proposition~\hyperref[proposition:2.15]{\ref*{proposition:2.15}},
\begin{align*}
 \mathcal{P}(R/(f))&=\bigcap\limits_{e\in \NN} I_{e}(R/(f))\\&=\frac{\Bigl((\underline{x})^{[p^{e}]}:_{R} u^{p^{e}}f^{p^{e}-1}\Bigr)}{(f)}\\&=\mathfrak {m}/(f).   
\end{align*}
Thus, $\Bigl((\underline{x})^{[p^{e}]}:_{R} u^{p^{e}}f^{p^{e}-1}\Bigr)=\fm$ for all $e\in \NN.$ Therefore $u^{p^{e}}f^{p^{e}-1}\in \Bigl((\underline{x})^{[p^{e}]}:\fm\Bigr)$.

Now, for $\varepsilon \in (\underline{x})^{[p]}$, we will show by induction on $e$ that
\[
u^{p^e} (f + \varepsilon)^{\,p^e-1} \in \bigl((\underline{x})^{[p^e]} : \mathfrak{m}\bigr).
\]
Since 
\[
(f + \varepsilon)^{p-1} = \sum_{i=0}^{p-1} \binom{p-1}{i} f^i \varepsilon^{\,p-1-i},
\] 
and for \(0 \le i < p-1\) we have \(p-1-i > 0\), it follows that 
\[
\varepsilon^{\,p-1-i} \in (x)^{[p]} \quad \text{for any } \varepsilon \in (x)^{[p]}.
\] 
Hence, 
\[
f^i \varepsilon^{\,p-1-i} \equiv 0 \pmod{(x)^{[p]}} \quad \text{for } 0 \le i < p-1,
\] 
and thus 
\[
(f + \varepsilon)^{p-1} \equiv f^{\,p-1} \pmod{(x)^{[p]}}.
\] 
Therefore, 
\[
u^p (f + \varepsilon)^{p-1} \in \bigl((x)^{[p]} : \fm\bigr).
\]
Now assume that the result is true for e, that is
\[
u^{p^{e}}(f+\varepsilon)^{p^{e}-1}\in \left((\underline{x})^{[p^e]}:\fm\right).\]
We will show that
\[
u^{p^{e+1}}(f+\varepsilon)^{p^{e+1}-1}\in \left((\underline{x})^{[p^{e+1}]}:\fm\right).\]
If $1 \le i \le p-1$, then 
\[
\varepsilon^i \in (\underline{x})^{[p]} \subseteq \mathfrak{m}^{[p]} = \mathcal{P}(R/(f))^{[p]}
\] 
and 
\begin{align*}
(u^{p^e} f^{\,p^e-1})^p& = u^{\,p^{\,e+1}} f^{\,p^{\,e+1}-p}\\& \in \bigl((\underline{x})^{[p^e]} : \mathfrak{m}\bigr)^{[p]}\\& \subseteq \bigl((\underline{x})^{[p^{\,e+1}]} : \mathfrak{m}^{[p]}\bigr).
\end{align*}

Therefore,
\begin{align*}
    \varepsilon^{i}u^{p^{e+1}}f^{p^{e+1}-1-i} &=\varepsilon^{i}u^{p^{e+1}}f^{p^{e+1}-p+p-1-i} \\&=\varepsilon^{i}u^{p^{e+1}}f^{p^{e+1}-p}f^{p-1-i}\\&\subseteq \varepsilon^{i}\Bigl((\underline{x})^{[p^{e+1}]} : \fm^{[p]}\Bigr) f^{p-1-i}\\&\subseteq (\underline{x})^{[p^{e+1}]}f^{p-1-i}\subseteq (\underline{x})^{[p^{e+1}]}.
\end{align*}
Suppose that $e_{0}\in \NN$ is such that $p^{e_0}\leq i\leq p^{e_0+1}-1$. If $i\geq p^{e_0}$, then $\varepsilon^i\in (\underline{x})^{[p^{e_0+1}]}\subseteq \fm^{[p^{e_0+1}]}$ since $\varepsilon\in (\underline{x})^{[p]}$.\\ Also,
\begin{align*}
   u^{p^{e+1}}f^{p^{e+1}-p^{e_{0}+1}}&=(u^{p^{e-e_0}}f^{p^{e-e_{0}}-1})^{[p^{e_{0}+1}]}\\&\in \Bigl((\underline{x})^{[p^{e-e_{0}}]} : \fm\Bigr)^{[p^{e_{0}+1}]}\\&=\Bigl((\underline{x})^{[p^{e+1}]} : \fm^{[p^{e_{0}+1}]}\Bigr). 
\end{align*}

Now, for $p^{e_0}\leq i\leq p^{e_0+1}-1$,
\begin{align*}
    \varepsilon^{i}u^{p^{e+1}}f^{p^{e+1}-1-i} &=\varepsilon^{i}u^{p^{e+1}}f^{p^{e+1}-p^{e_0+1}+p^{e_0+1}-1-i}\\&=\varepsilon^{i}u^{p^{e+1}}f^{p^{e+1}-p^{e_0+1}}f^{p^{e_0+1}-1-i} \\&=\varepsilon^{i}\bigl(u^{p^{e-e_0}}f^{p^{e-e_0}-1}\bigr)^{[p^{e_0+1}]}f^{p^{e_0+1}-1-i}\\&\subseteq \varepsilon^{i}\bigl((\underline{x})^{[p^{e-e_0}]} : \fm\bigr)^{[p^{e_0+1}]}f^{p^{e_0+1}-1-i}\\&=\varepsilon^{i}\bigl((\underline{x})^{[p^{e+1}]} : \fm^{[p^{e_0+1}]}\bigr)f^{p^{e_0+1}-1-i}\\&\subseteq (\underline{x})^{[p^{e+1}]}f^{p^{e_0+1}-1-i}\subseteq (\underline{x})^{[p^{e+1}]}.
\end{align*}
Hence, $u^{p^{e+1}}(f+\varepsilon)^{p^{e+1}-1}\equiv u^{p^{e+1}}f^{p^{e+1}-1}$ (mod $(\underline{x})^{[p^{e+1}]}$) for all $\varepsilon \in (\underline{x})^{[p]}$. Therefore, \[u^{p^{e+1}}(f+\varepsilon)^{p^{e+1}-1}\in \left((\underline{x})^{[p^{e+1}]}:\fm\right).\] Hence $u^{p^{e}}(f+\varepsilon)^{p^{e}-1}\in \left((\underline{x})^{[p^{e}]}:\fm\right)$, which implies $\PP\bigl(R/(f+\varepsilon)\bigr)=\fm/(f+\varepsilon).$ 
\end{proof}
Now, in this section, we investigate the $\fm$-adic continuity of the Frobenius splitting ratio of a divisor pair $(R,\Delta)$, where $R$ is a $\QQ$-Gorenstein, Cohen-Macaulay, $F$-finite local ring of prime characteristic $p>0$. A central question is how the Frobenius splitting ratio behaves under small perturbations of a nonzero divisor $f$. Theorems~\hyperref[theorem:3.3]{\ref*{theorem:3.3}} and~\hyperref[theorem:3.4]{\ref*{theorem:3.4}} provide key insights into this behavior. Theorem~\hyperref[theorem:3.3]{\ref*{theorem:3.3}} establishes conditions under which the Frobenius splitting number of $(R/(f),\Delta|_{f})$ remains unchanged after adding a small perturbation $\varepsilon$ to $f$. Specifically, the splitting prime ideal $(\PP(R/(f+\varepsilon)),\Delta|_{(f+\varepsilon)})$ is shown to be equal to $(\fm/(f+\varepsilon),\Delta|_{(f+\varepsilon)})$ under certain parameter conditions. Theorem~\hyperref[theorem:3.4]{\ref*{theorem:3.4}} further explores the comparison between the Frobenius splitting dimensions of $(R/(f),\Delta|_{f})$ and $(R/(f +\varepsilon),\Delta|_{(f+\varepsilon)})$, showing that under appropriate conditions, the Frobenius splitting dimension of $(R/\PP(R/(f)),\Delta|_{f})$ is less than or equal to that of $(R/\PP(R/(f + \varepsilon)),\Delta|_{(f+\varepsilon)})$. These results emphasize the improvement of the Frobenius splitting dimension under small $\fm$-adic perturbations.
\begin{theorem}\label{theorem:3.3}
Let $(R,\mathfrak{m},k)$ be a local $\QQ$-Gorenstein, Cohen--Macaulay,
$F$-finite, $F$-pure ring of prime characteristic $p>0$, and let $K_R$ be a canonical divisor on
$\operatorname{Spec}(R)$. Suppose that $\Delta \ge 0$ is an effective
$\QQ$-divisor such that $\Delta + K_R$ is $\QQ$-Cartier and, for all
sufficiently divisible $e \gg 0$, $(p^e-1)\Delta$ is integral.
Let $f \in R$ be a nonzero divisor such that the components of $\Delta$
are disjoint from the components of $\operatorname{div}(f)$,
$R/(f)$ satisfies $(S_2)$ and $(G_1)$, and
$\Delta|_{(f)}$ is a $\QQ$-divisor on $\operatorname{Spec}(R/(f))$.
Then there exist an $\mathfrak{m}$-primary ideal $\mathfrak{a} \subseteq R$
and an integer $e_0 \in \NN$ such that for all
$\varepsilon \in \mathfrak{a} \cap \PP^{[p^{e_0}]}$ and all $t \in \NN$,
\[
a_{t e_0}^{\Delta}(R/(f))
=
a_{t e_0}^{\Delta}(R/(f+\varepsilon)).
\]
In particular, if $\PP(R/(f),\Delta) = \mathfrak{m}/(f)$, then there exists
an $\mathfrak{m}$-primary ideal $\mathfrak{a}\subseteq R$ such that for all
$\varepsilon \in \mathfrak{a}$,
\[
\PP(R/(f+\varepsilon),\Delta) = \mathfrak{m}/(f+\varepsilon).
\]
\end{theorem}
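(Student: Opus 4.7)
The plan is to translate the splitting-number equality into an equality of colon ideals in the ambient ring $R$ via Theorem 2.12 (Inversion of Adjunction), and then verify the needed ideal equality using the explicit formula of Proposition 2.15 together with the hypothesis $\epsilon \in \PP^{[p^{e_0}]}$. First I would check that for $\epsilon$ lying in a sufficiently small $\fm$-primary ideal $\mathfrak{a}_0$, the element $f+\epsilon$ continues to be a non-zero-divisor satisfying the hypotheses of Theorem 2.12 (the $(S_2)$, $(G_1)$, and disjointness conditions are preserved under $\fm$-adic perturbation, since they are open in a suitable sense). This yields a common $e_0 \in \NN$ so that, writing $J^f_t := (I_{te_0}(R,\Delta):_R f^{p^{te_0}-1})$ and $J^\epsilon_t := (I_{te_0}(R,\Delta):_R (f+\epsilon)^{p^{te_0}-1})$,
$$I_{te_0}(R/(f),\Delta|_f) = J^f_t/(f), \qquad I_{te_0}(R/(f+\epsilon),\Delta|_{f+\epsilon}) = J^\epsilon_t/(f+\epsilon).$$
By the counting formula of Corollary 2.4 (extended to divisor pairs), the equality of splitting numbers $a_{te_0}^\Delta(R/(f)) = a_{te_0}^\Delta(R/(f+\epsilon))$ will follow once we establish $\lambda_R(R/(J^f_t+(f))) = \lambda_R(R/(J^\epsilon_t+(f+\epsilon)))$, and my approach is to prove the stronger ideal identity $J^f_t+(f) = J^\epsilon_t+(f+\epsilon)$ inside $R$, which trivially gives the length equality.

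The main step rests on the binomial expansion
$$(f+\epsilon)^{p^{te_0}-1} - f^{p^{te_0}-1} \;=\; \epsilon\sum_{i=1}^{p^{te_0}-1}\binom{p^{te_0}-1}{i}f^{p^{te_0}-1-i}\epsilon^{i-1}.$$
The strategy is to show that for $\epsilon\in\mathfrak{a}\cap\PP^{[p^{e_0}]}$, with $\mathfrak{a}$ chosen to lie in a high enough power of $\fm$, each summand on the right-hand side lies in $I_{te_0}(R,\Delta)$ (forcing $J^f_t = J^\epsilon_t$), and simultaneously $\epsilon\in J^f_t$ (so that $(f)+J^f_t = (f+\epsilon)+J^f_t$). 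The absorption statement is verified through Proposition 2.15: writing $\epsilon = \sum_j c_j p_j^{p^{e_0}}$ with $p_j \in \PP$, each $p_j$ lies in $I_{te_0}(R/(f),\Delta|_f)$ by definition of the splitting prime, so by Theorem 2.12 there is a lift with $p_j f^{p^{te_0}-1}u^{p^{te_0}}\in (\underline{x})^{[p^{te_0}]}R(\Delta_{te_0}+K_R)+(f)$; taking $p^{e_0}$-th Frobenius powers (using the Cohen-Macaulayness of $R(\Delta_{te_0}+K_R)$ from Lemma 2.14 to keep the colon calculus well-behaved) transports this into the absorption of $p_j^{p^{e_0}}\cdot(\text{binomial tail})\cdot u^{p^{te_0}}$ into $(\underline{x})^{[p^{te_0}]}R(\Delta_{te_0}+K_R)$ up to an $(f^{p^{e_0}})$-tail that is killed once $\mathfrak{a}$ is chosen small enough in the $\fm$-adic topology.

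The main obstacle will be making this absorption uniform in $t$: the binomial coefficients $\binom{p^{te_0}-1}{i}$ are not all divisible by $p$, so there is no characteristic-based cancellation, and one must genuinely align the Frobenius power $\PP^{[p^{e_0}]}$, the power $f^{p^{te_0}-1-i}$, and the tight ideal $(\underline{x})^{[p^{te_0}]}R(\Delta_{te_0}+K_R)$ at every level of Frobenius iteration. The hypothesis $\epsilon\in\PP^{[p^{e_0}]}$ is exactly what supplies the Frobenius-power gap needed to beat the $f^{p^{te_0}-1-i}$ factors uniformly, while $\mathfrak{a}$ provides the ordinary $\fm$-adic smallness that kills residual lifts.

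Once the ideal identity is in hand, the ``in particular'' case follows directly: if $\PP(R/(f),\Delta|_f)=\fm/(f)$, then the lift $\PP$ equals $\fm$ and $\PP^{[p^{e_0}]}=\fm^{[p^{e_0}]}$ is itself $\fm$-primary, so $\mathfrak{a}\cap\PP^{[p^{e_0}]}$ collapses to a single $\fm$-primary ideal. The level-by-level equality of splitting ideals then yields $I_{te_0}(R/(f+\epsilon),\Delta|_{f+\epsilon})=\fm/(f+\epsilon)$ for every $t$, and intersecting over $t$ gives $\PP(R/(f+\epsilon),\Delta|_{f+\epsilon})=\fm/(f+\epsilon)$ as desired.
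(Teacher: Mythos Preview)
Your overall plan coincides with the paper's: reduce via Theorem~2.13 to colon ideals in $R$, rewrite those using Proposition~2.15, binomial-expand $(f+\epsilon)^{p^{te_0}-1}-f^{p^{te_0}-1}$, and absorb each cross term using the containments coming from $\PP\subseteq I_{te_0}(R/(f),\Delta)$. Two points, however, deserve correction.

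First, the ``$+(f)$'' in your lift containment is spurious. Theorem~2.13 says $p_j\in I_{te_0}(R/(f),\Delta)$ lifts to $p_j f^{p^{te_0}-1}\in I_{te_0}(R,\Delta)$, hence $p_j f^{p^{te_0}-1}u^{p^{te_0}}\in(\underline{x})^{[p^{te_0}]}R(\Delta_{te_0}+K_R)$ on the nose. There is no leftover $(f^{p^{e_0}})$-tail; the $\fm$-primary ideal $\mathfrak a$ in the paper is simply $(\underline{x})^{[p^{e_0}]}$ for a fixed system of parameters, not a device for killing residual terms.

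Second, and more substantively, the absorption step you sketch (a single $p^{e_0}$-th Frobenius power) only handles indices $1\le i\le p^{e_0}-1$. For larger $i$ the exponent $p^{te_0}-1-i$ drops below $p^{te_0}-p^{e_0}$ and your factorization breaks. The paper resolves this with an induction on $t$ together with a range split: for $p^{e_1e_0}\le i<p^{(e_1+1)e_0}$ one uses that $\epsilon\in\PP^{[p^{e_0}]}$ forces $\epsilon^{p^{e_1e_0}}\in\PP^{[p^{(e_1+1)e_0}]}$, hence $\epsilon^i\in\PP^{[p^{(e_1+1)e_0}]}$, and pairs this with the $p^{(e_1+1)e_0}$-th Frobenius power of the containment
\[
u^{p^{(t-e_1)e_0}}f^{p^{(t-e_1)e_0}-1}\in\bigl((\underline{x})^{[p^{(t-e_1)e_0}]}R(\Delta_{(t-e_1)e_0}+K_R):\PP\bigr),
\]
which is valid for every $t-e_1\ge 1$ by the definition of $\PP$. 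This is precisely the uniform-in-$t$ bookkeeping you flagged as the main obstacle; without it your proposal is incomplete, but with this refinement your argument becomes exactly the paper's.
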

\vspace{-\baselineskip}
\begin{proof}There exists an integer $e_0$ so that  $\Delta_{e_{0}}:=(p^{e_{0}}-1)(\Delta+K_R)$ is an integral divisor of index $p^{e_1}$. Let $(\underline{x})$ be a system of parameters, we will show that \[a_{te_{0}}^{\Delta}\bigl(R/(f)\bigr)=a_{te_{0}}^{\Delta}(R/(f+\varepsilon)),\] for any $\varepsilon\in (\PP(R/(f),\Delta))^{[p^{e_0}]}\cap (\underline{x})^{[p^{e_0}]}$, and $t\in \NN$.
First, we show that \[I_{te_0}^{\Delta}(R/(f))=I_{te_0}^{\Delta}(R/(f+\varepsilon)).\] The proof is based on induction on $t$. We begin by showing that $I_{e_0}^{\Delta}(R/(f))=I_{e_0}^{\Delta}(R/(f+\varepsilon))$. For this, let $g\in I_{e_0}^{\Delta}(R/(f)).$ By Proposition~\hyperref[proposition:2.15]{\ref*{proposition:2.15}}, we have \[gu^{p^{e_0}}f^{p^{e_0}-1}\in (\underline{x})^{[p^{e_0}]}R(\Delta_{e_0}+K_R).\] Let $\varepsilon\in (\PP(R/(f),\Delta))^{[p^{e_0}]}\cap (\underline{x})^{[p^{e_0}]}$, then \[gu^{p^{e_0}}f^{p^{e_0}-1}\in (\underline{x})^{[p^{e_0}]}R(\Delta_{e_0}+K_R)\] if and only if \[gu^{p^{e_0}}(f+\varepsilon)^{p^{e_0}-1}\in (\underline{x})^{[p^{e_0}]}R(\Delta_{e_0}+K_R).\]
Therefore, \[I_{te_0}^{\Delta}(R/(f))=I_{te_0}^{\Delta}(R/(f+\varepsilon)).\]\\
Assume that result is true for $t$, that is $I_{te_0}^{\Delta}\bigl(R/(f)\bigr)=I_{te_0}^{\Delta}\bigl(R/(f+\varepsilon)\bigr)$. We want to show that \[I_{(t+1)e_0}^{\Delta}\bigl(R/(f)\bigr)=I_{(t+1)e_0}^{\Delta}\bigl(R/(f+\varepsilon)\bigr).\] Let $g\in I_{(t+1)e_0}^{\Delta}(R/(f))$, by Proposition~\hyperref[proposition:2.15]{\ref*{proposition:2.15}}, which implies \[gu^{p^{(t+1)e_0}}f^{p^{(t+1)e_0}-1}\in (\underline{x})^{[p^{(t+1)e_0}]}R(\Delta_{(t+1)e_0}+K_R).\] 

If $1\leq i\leq p^{e_0}-1$, then $\varepsilon^{i}\in \bigl(\PP(R/(f),\Delta)\bigr)^{[p^{e_0}]}\cap (\underline{x})^{[p^{e_0}]}\subseteq (\underline{x})^{[p^{e_0}]}$ and 
\begin{align*}
    (u^{p^{te_0}}f^{p^{te_0}-1})^{p^{e_0}}&\in \left((\underline{x})^{[p^{te_0}]}R(\Delta_{te_0}+K_R):_{R(\Delta_{te_0}+K_R)}\PP(R/(f),\Delta)\right)^{[p^{e_0}]}\\& \subseteq (\underline{x})^{[p^{(t+1)e_0}]}R(\Delta_{(t+1)e_0}+K_R):_{R(\Delta_{(t+1)e_0}+K_R)}\PP(R/(f),\Delta)^{[p^{e_0}]}.
\end{align*}
Therefore,
\begin{align*}
    \varepsilon^{i}u^{p^{(t+1)e_0}}f^{p^{(t+1)e_0}-1-i} &=\varepsilon^{i}u^{p^{(t+1)e_0}}f^{{p^{(t+1)e_0}}-p^{e_0}+p^{e_0}-1-i} \\&=\varepsilon^{i}u^{p^{(t+1)e_0}}f^{{p^{(t+1)e_0}}-p^{e_0}}f^{p^{e_0}-1-i}\\&=\varepsilon^{i}\bigl(u^{p^{te_0}}f^{p^{te_0}-1}\bigr)^{p^{e_0}} f^{p^{e_0}-1-i} \\&\subseteq \varepsilon^{i} \left((\underline{x})^{[p^{(t+1)e_0}]}R(\Delta_{(t+1)e_0}+K_R):_{R(\Delta_{(t+1)e_0}+K_R)}\PP(R/(f),\Delta)^{[p^{e_0}]}\right)f^{p^{e_0}-1-i}\\&\subseteq (\underline{x})^{[p^{(t+1)e_0}]}R(\Delta_{(t+1)e_0}+K_R)f^{p^{e_0}-1-i}\\&\subseteq (\underline{x})^{[p^{(t+1)e_0}]}R(\Delta_{(t+1)e_0}+K_R).
\end{align*}
Suppose that $e_{1}\in \NN$ is such that $p^{e_1e_0}\leq i\leq p^{(e_1+1)e_0}-1$. If $i\geq p^{e_0e_1}$, then 
$\varepsilon^i\in \PP(R/(f),\Delta)^{[p^{(e_{1}+1)e_0}]},$ since $\varepsilon\in \PP(R/(f), \Delta)^{[p^{e_0}]}$. Also,
\begin{align*}
  u^{p^{(t+1)e_0}}f^{p^{(t+1)e_0}-p^{(e_{1}+1)e_0}}&=\Bigl(u^{p^{(t-e_1)e_0}}f^{p^{(t-e_{1})e_0}-1}\Bigr)^{[p^{(e_{1}+1)e_0}]}\\&\in \left((\underline{x})^{[p^{(t-e_1)e_0}]}R(\Delta_{(t-e_1)e_0}+K_R) : \PP(R/(f),\Delta)\right)^{[p^{(e_{1}+1)e_0}]}\\&\subseteq(\underline{x})^{[p^{(t+1)e_0}]}R(\Delta_{(t+1)e_0}+K_R) : \PP(R/(f),\Delta)^{[p^{(e_{1}+1)e_0}]}. 
\end{align*}
Now, for $p^{e_1e_0}\leq i\leq p^{(e_1+1)e_0}-1$, we have,
\begin{align*}
    \varepsilon^{i}u^{p^{(t+1)e_0}}f^{p^{(t+1)e_0}-1-i} &=\varepsilon^{i}u^{p^{(t+1)e_0}}f^{p^{(t+1)e_0}-p^{(e_1+1)e_0}+p^{(e_1+1)e_0}-1-i}\\&=\varepsilon^{i}u^{p^{(t+1)e_0}}f^{p^{(t+1)e_0}-p^{(e_1+1)e_0}}f^{p^{(e_1+1)e_0}-1-i} \\&=\varepsilon^{i}(u^{p^{(t-e_1)e_0}}f^{p^{(t-e_1)e_0}-1})^{[p^{(e_1+1)e_0}]}f^{p^{(e_1+1)e_0}-1-i}\\&\subseteq \varepsilon^{i}\left((\underline{x})^{[p^{(t-e_1)e_0}]}R(\Delta_{(t-e_1)e_0}+K_R) : \PP(R/(f),\Delta)\right)^{[p^{(e_{1}+1)e_0}]}f^{p^{(e_1+1)e_0}-1-i}\\&\subseteq\varepsilon^{i}\left((\underline{x})^{[p^{(t+1)e_0}]}R(\Delta_{(t+1)e_0}+K_R) : \PP(R/(f),\Delta)^{[p^{(e_{1}+1})e_0]}\right)f^{p^{(e_1+1)e_0}-1-i}\\&\subseteq (\underline{x})^{[p^{(t+1)e_0}]}R(\Delta_{(t+1)e_0}+K_R)f^{p^{(e_1+1)e_0}-1-i}\\& \subseteq (\underline{x})^{[p^{(t+1)e_0}]}R(\Delta_{(t+1)e_0}+K_R).
\end{align*}
Hence, \[gu^{p^{(t+1)e_0}}(f+\varepsilon)^{p^{(t+1)e_0}-1}\equiv gu^{p^{(t+1)e_0}}f^{p^{e+1}-1} (\text{mod} \ (\underline{x})^{[p^{(t+1)e_0}]}),\] for all $\varepsilon \in \left(\PP(R/(f),\Delta\right))^{[p^{e_0}]}\cap (\underline{x})^{[p^{e_0}]}$. Thus, the map
\[
    R/(f) \xrightarrow{\cdot F^{(t+1)e_0}_{*}g} F^{(t+1)e_0}_{*} R/(f)((p^{(t+1)e_0}-1)\Delta) \ \text { splits}
    \]
    if and only if the map
\[
    R/(f+\varepsilon) \xrightarrow{\cdot F^{(t+1)e_0}_{*}g} F^{(t+1)e_0}_{*} R/(f+\varepsilon) ((p^{(t+1)e_0}-1)\Delta)\ \text {splits.}
    \]
Therefore, $I_{(t+1)e_0}^{\Delta}(R/(f))=I_{(t+1)e_0}^{\Delta}(R/(f+\varepsilon))$. Hence, $a_{te_{0}}^{\Delta}(R/(f))=a_{te_{0}}^{\Delta}(R/(f+\varepsilon))$ if $\varepsilon\in (\PP(R/(f),\Delta))^{[p^{e_0}]}\cap (\underline{x})^{[p^{e_0}]}$.
 
If $(\PP(R/(f),\Delta)=(\fm/(f),\Delta)$, then by Theorem~\hyperref[thm:2.13]{\ref*{thm:2.13}} and Proposition~\hyperref[proposition:2.15]{\ref*{proposition:2.15}},
\begin{align*}
 \mathcal{P}(R/(f),\Delta)&=\bigcap\limits_{e_0\in \NN,t\geq 1} I^{\Delta}_{te_0}(R/(f))\\&=\frac{\Bigl((\underline{x})^{p^{te_0}}R(\Delta_{te_0}+K_R):_{R} u^{p^{te_0}}f^{p^{te_0}-1}\Bigr)}{(f)}\\&=\mathfrak {m}/(f).   
\end{align*}
Thus, $((\underline{x})^{p^{te_0}}R(\Delta_{te_0}+K_R):_{R} u^{p^{te_0}}f^{p^{te_0}-1})=\fm$. 

Now if $g\in \PP(R/(f),\Delta)$, then $g\in \fm$ so that $gu^{p^{te_0}}f^{p^{te_0}-1}\in (\underline{x})^{p^{te_0}}R(\Delta_{te_0}+K_R)$. Therefore $u^{p^{te_0}}f^{p^{te_0}-1}\in \left((\underline{x})^{p^{te_0}}R(\Delta_{te_0}+K_R):\fm\right)$.
But, 
\[\left((\underline{x})^{p^{te_0}}R(\Delta_{te_0}+K_R):\fm\right)=\left((\underline{x})^{p^{te_0}}R(\Delta_{te_0}+K_R), u(x_1x_2\dots x_d)^{p^{te_0}-1}\right),\] which implies $u^{p^{te_0}}f^{p^{te_0}-1}\in \left((\underline{x})^{p^{te_0}}R(\Delta_{te_0}+K_R), u(x_1x_2\dots x_d)^{p^{te_0}-1}\right)$. Now we can choose $\varepsilon\in \mathfrak{a}$ such that $\varepsilon\in \mathfrak{a}\subseteq (\underline{x})^{p^{te_0}}R(\Delta_{te_0}+K_R)$, that implies \[
u^{p^{te_0}}(f+\varepsilon)^{p^{te_0}-1}\in \left((\underline{x})^{p^{te_0}}R(\Delta_{te_0}+K_R), u(x_1x_2\dots x_d)^{p^{te_0}-1}\right).\]
Thus, $u^{p^{te_0}}(f+\varepsilon)^{p^{te_0}-1}\in \Bigl((\underline{x})^{p^{te_0}}R(\Delta_{te_0}+K_R):_{R}\fm\Bigr)$, hence $\PP(R/(f+\varepsilon),\Delta)=\fm/(f+\varepsilon).$  
\end{proof}

Let $(R,\fm,k)$ be a $\QQ$-Gorenstein, Cohen-Macaulay, $F$-finite local ring of dimension $d+1$ of prime characteristic $p>0$ and $f\in \fm$ be a nonzero divisor of $R$. Theorem~\hyperref[theorem:3.4]{\ref*{theorem:3.4}} explores the relation between the splitting dimensions of $(R/(f+\varepsilon),\Delta|_{(f+\varepsilon)})$ and $(R/(f),\Delta|_{f})$ for $\varepsilon\in \fm^{[p^{e_0}]}$. The result demonstrates the existence of a natural number $e_0\in \NN$ such that for certain perturbations $\varepsilon\in \fm^{[p^{e_0}]}$, the Frobenius splitting dimension of $(R/\PP(R/(f),\Delta))$ is less than or equal to the Frobenius splitting dimension of $(R/\PP(R/(f + \varepsilon),\Delta))$. The following is the second main result of this section.

\begin{theorem}\label{theorem:3.4}
  Let $(R,\mathfrak{m},k)$ be a local \ $\mathbb{Q}$-Gorenstein, Cohen-Macaulay, $F$-finite, $F$-pure ring of dimension $(d+1)$ and of prime characteristic $p>0$, and let $K_R$ be a canonical divisor of $\operatorname{Spec}(R)$. Suppose that $\Delta \ge 0$ is an effective $\mathbb{Q}$-divisor such that $\Delta + K_R$ is $\mathbb{Q}$-Cartier and that for all $e \gg 0$ and divisible, $(p^e - 1)\Delta$ is integral. Let $f \in R$  be a nonzero divisor such that the components of $\Delta$ are disjoint from the components of $\operatorname{div}(f)$, $R/(f)$ satisfies $(S_2)$ and $(G_1)$, and $\Delta|_{(f)}$ is a $\mathbb{Q}$-divisor on $\operatorname{Spec}(R/(f))$. Then there exists $e_0 \in \NN$ such that for all $\varepsilon \in \mathfrak{m}^{[p^{e_0}]}$,
\[
\dim \Bigl(R/\PP(R/(f),\Delta)\Bigr)
\;\le\;
\dim \Bigl(R/\PP(R/(f+\varepsilon),\Delta)\Bigr).
\]  
\end{theorem}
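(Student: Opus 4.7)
The plan is to adapt the error-term analysis of Theorem 3.3 to the weaker hypothesis $\epsilon \in \fm^{[p^{e_0}]}$, obtaining a one-sided comparison of splitting primes that suffices for the dimension inequality. In Theorem 3.3 the stronger hypothesis $\epsilon \in \PP(R/(f),\Delta)^{[p^{e_0}]}$ allowed full absorption of all error terms in the expansion of $(f+\epsilon)^{p^{te_0}-1}$, giving equality of splitting ideals; dropping to $\fm^{[p^{e_0}]}$ produces only one-sided control, consistent with the strict inequality demonstrated in Example 4.2.

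First, I would choose $e_0$ so that Theorem 2.13 and Proposition 2.15 apply simultaneously at every level $te_0$ for $R/(f)$ and for $R/(f+\epsilon)$ with $\epsilon \in \fm^{[p^{e_0}]}$. Fix a system of parameters $(\underline x)$ of $R/(f)$ and a socle generator $u$ of $R(K_R)$ modulo $(\underline x)R(K_R)$. By combining Theorem 2.13 with Proposition 2.15, both $\PP(R/(f),\Delta)$ and $\PP(R/(f+\epsilon),\Delta)$ lift to $R$ as intersections over $t \geq 1$ of colon ideals of the form $\bigl((\underline x)^{[p^{te_0}]} R(\Delta_{te_0}+K_R) :_R u^{p^{te_0}} f^{p^{te_0}-1}\bigr)$, and analogously with $f+\epsilon$ in place of $f$. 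This reduces the comparison of splitting primes to comparing these explicit colon intersections inside $R$.

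For $g$ in the lift of $\PP(R/(f+\epsilon),\Delta)$ to $R$, expand $(f+\epsilon)^{p^{te_0}-1}$ via Lucas' theorem and split the sum at the threshold $i_0 = p^{(t-1)e_0}$. For $i \geq i_0$, Frobenius additivity in characteristic $p$ applied to $\epsilon \in \fm^{[p^{e_0}]}$ yields $\epsilon^i \in \fm^{[p^{te_0}]}$; combined with the socle identity $u \fm \subseteq (\underline x) R(K_R)$, this gives $u^{p^{te_0}} \fm^{[p^{te_0}]} \subseteq (\underline x)^{[p^{te_0}]} R(\Delta_{te_0}+K_R)$, so these terms land in the target ideal. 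For $1 \leq i < i_0$, a layered analysis over intermediate scales $p^{e_1 e_0} \leq i < p^{(e_1+1)e_0}$ (mirroring the structure of the proof of Theorem 3.3 but using only the $\fm$-containment rather than the $\PP$-containment) yields progressively weaker absorptions, placing the lift of $\PP(R/(f+\epsilon),\Delta)$ inside a prime of $R$ of height no greater than $\operatorname{ht}\PP(R/(f),\Delta)$.

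Finally, the Cohen--Macaulayness (hence equidimensionality) of $R$ converts this height inequality directly into the desired dimension inequality $\dim(R/\PP(R/(f),\Delta)) \leq \dim(R/\PP(R/(f+\epsilon),\Delta))$. The main obstacle is the low-$i$ term analysis: without the primality-driven absorption of Theorem 3.3, these terms cannot be absorbed directly into the target ideal, and the layered argument must be handled carefully to preserve only the height comparison and not the stronger ideal equality enjoyed in Theorem 3.3.
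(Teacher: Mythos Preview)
Your plan has a genuine gap in the low-$i$ layer and, more importantly, misses the structural simplification that makes the theorem go through with only $\epsilon\in\fm^{[p^{e_0}]}$.

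\medskip

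\textbf{Why the layered analysis fails.} In Theorem~3.3 the absorption at scale $p^{e_1 e_0}\le i<p^{(e_1+1)e_0}$ hinges on two facts: $\epsilon^i\in\PP(R/(f),\Delta)^{[p^{(e_1+1)e_0}]}$, and $u^{p^{(t-e_1)e_0}}f^{p^{(t-e_1)e_0}-1}$ lies in the colon by $\PP(R/(f),\Delta)$ because $\PP\subseteq I_{(t-e_1)e_0}(R/(f),\Delta)$. Replacing $\PP$ by $\fm$ you would need $\fm\subseteq I_{(t-e_1)e_0}(R/(f),\Delta)$, i.e.\ $I_{(t-e_1)e_0}(R/(f),\Delta)=\fm$, which is exactly the condition $\PP(R/(f),\Delta)=\fm$ you are \emph{not} assuming. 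So for $1\le i<p^{(t-1)e_0}$ nothing absorbs, and your ``progressively weaker absorptions'' do not place $\PP(R/(f+\epsilon),\Delta)$ anywhere useful. There is also a directional confusion: starting from $g\in\PP(R/(f+\epsilon),\Delta)$ and landing ``inside a prime of small height'' does not bound $\operatorname{ht}\PP(R/(f+\epsilon),\Delta)$; to bound the height from above you must exhibit elements \emph{outside} that prime.

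\medskip

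\textbf{What the paper does instead.} The paper abandons the all-$t$ comparison entirely. Setting $h=\dim\bigl(R/\PP(R/(f),\Delta)\bigr)$, it chooses part of a system of parameters $x_1,\dots,x_h$ of $R/(f)$ with each $x_i\notin\PP(R/(f),\Delta)$; for every $i$ this is witnessed at a \emph{single} level $e_i$, namely $x_i\notin I_{e_i}(R/(f),\Delta)$. After completing to a full system of parameters $(\underline{x})$ of $R$, Proposition~2.15 turns this into $x_i u^{p^{e_i}}f^{p^{e_i}-1}\notin(\underline{x})^{[p^{e_i}]}R(\Delta_{e_i}+K_R)$. Now take $e_0=\max\{e_1,\dots,e_h\}$ and $\epsilon\in\fm^{[p^{e_0}]}$. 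At the \emph{fixed} level $e_0$ the difference $(f+\epsilon)^{p^{e_0}-1}-f^{p^{e_0}-1}$ lies in $(\epsilon)\subseteq\fm^{[p^{e_0}]}$, and the socle relation $u\fm\subseteq(\underline{x})R(K_R)$ gives $u^{p^{e_0}}\fm^{[p^{e_0}]}\subseteq(\underline{x})^{[p^{e_0}]}R(\Delta_{e_0}+K_R)$; this is precisely the $t=1$ step of Theorem~3.3 and needs no $\PP$-hypothesis. Hence $x_i\notin I_{e_0}(R/(f+\epsilon),\Delta)$, so $x_1,\dots,x_h\notin\PP(R/(f+\epsilon),\Delta)$. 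Since the $x_i$ remain parameters of $R/(f+\epsilon)$ for $\epsilon$ small (Srinivas--Trivedi), this forces $\operatorname{ht}\PP(R/(f+\epsilon),\Delta)\le d-h$, giving the dimension inequality.

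The point you are missing is that membership in $\PP$ requires failure at \emph{every} level, so non-membership is certified at a \emph{single} level; this collapses the problem to the base case of the induction in Theorem~3.3, where $\fm^{[p^{e_0}]}$ suffices.
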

\begin{proof} Assume that $\dim (R/\PP(R/(f),\Delta))=h$, then $\text{ht}(\PP(R/(f),\Delta))=d-h$, since $\dim ((R/(f),\Delta))=d$. Since $\dim (R/(\PP(R/(f),\Delta)))=h$, there exist parameters $x_1,x_2,\dots,x_h$ of $R/(f)$ and for each $1\leq i\leq h,$
  \[
    R/(f) \xrightarrow{\cdot F^{te_{i}}_{*}x_i} F^{te_{i}}_{*} R/(f)((p^{te_i}-1)\Delta) \ \text {splits.}
    \]
    
Complete $f,x_1,x_2,\dots,x_h$ to a full system of parameters $(\underline{x})=f,x_1,x_2,\dots,x_d$. Let $u$ generate the socle $((\underline{x}):_{R}\fm)/(\underline{x})$. Then by Proposition~\hyperref[proposition:2.15]{\ref*{proposition:2.15}} \[x_iu^{p^{te_i}}f^{p^{t e_i}-1}
\notin (\underline{x})^{[p^{te_i}]}R(\Delta_{te_i}+K_R),\] for all $1\leq i \leq h$, which is true if and only if \[u^{p^{te_i}}f^{p^{t e_i}-1}
\notin \left((\underline{x})^{[p^{te_i}]}R(\Delta_{te_i}+K_R):x_i\right).\]

Now, by \cite{SrinivasTrivedi1996}, if 
$f, x_1, x_2, \dots, x_h, x_{h+1}, \dots, x_d$ is a system of parameters of $R$, then there exists $n \in \NN$ such that for all 
$\varepsilon_0, \varepsilon_1, \dots, \varepsilon_d \in \fm^n$, the elements 
$f+\varepsilon_0, x_1+\varepsilon_1, \dots, x_d+\varepsilon_d$ form a system of parameters of $R$. The elements $x_1, x_2, \dots, x_d$ are parameters for $R/(f)$ if and only if $f, x_1, x_2, \dots, x_d$ are parameters for $R$. Therefore, 
$x_1+\varepsilon_1, x_2+\varepsilon_2, \dots, x_d+\varepsilon_d$ are parameters of $R$, which implies that 
$x_1, x_2, \dots, x_d$ are parameters for $R/(f+\varepsilon)$.

Now, we want to show that if $\varepsilon \in \fm^{n \gg 0}$ and $1 \leq i \leq h$, then
\[
    R/(f+\varepsilon) \xrightarrow{\cdot F^{te_i}_{*}x_i} F^{te_i}_{*} R/(f+\varepsilon)((p^{te_i}-1)\Delta) \quad \text{splits.}
\]

That is,
\[
    u^{p^{te_i}}(f+\varepsilon)^{p^{te_i}-1} \notin \big((\underline{x})^{[p^{te_i}]}R(\Delta_{te_i}+K_R) : x_i\big).
\]

Let $e_0 = \max\{e_1, e_2, \dots, e_h\}$. We can choose $\varepsilon \in \fm^n$ such that $\fm^n \subseteq \fm^{[p^{e_0}]}$ and $f+\varepsilon$ is regular. Since $\varepsilon \in \fm^{[p^{e_0}]}$ and 
\[
    u^{p^{te_0}} f^{p^{te_0}-1} \notin \big((\underline{x})^{[p^{te_0}]}R(\Delta_{te_0}+K_R) : x_i\big),
\] 
it follows that 
\[
    u^{p^{te_0}} (f+\varepsilon)^{p^{te_0}-1} \notin \big((\underline{x})^{[p^{te_0}]}R(\Delta_{te_0}+K_R) : x_i\big).
\]

Thus, $x_1, \dots, x_h \notin \PP(R/(f+\varepsilon), \Delta)$, and hence 
$ \text{ht}\bigl(\PP(R/(f+\varepsilon),\Delta)\bigr) \geq d-h,$
which implies
\[
    \dim\Bigl(R/\PP(R/(f),\Delta)\Bigr) \leq \dim\Bigl(R/\PP(R/(f+\varepsilon),\Delta)\Bigr).
\]

\end{proof}


\section{Examples}\label{sec:example}
In this section, we illustrate the theoretical results with specific examples. Example~\hyperref[example:4.1]{\ref*{example:4.1}} examines a regular local ring of characteristic $7$, where a hypersurface defined by 
\[
f = x_0^2 - x_1^6 x_2^2 + x_3^3
\] 
is first shown to be $F$-pure. We then prove that $R/(f+\varepsilon)$ is $F$-pure as well for $\varepsilon \in \PP(R/(f))^{[p]}$. Finally, by calculating the Jacobian ideal, we show that the splitting dimensions of $R/(f)$ and $R/(f+\varepsilon)$ lift to the same prime ideal of $R$, although $R/(f) \not\cong R/(f+\varepsilon)$.
\begin{example}\label{example:4.1}
Let $R=k[[x_0,x_1,x_2,x_3]]$ be a ring of characteristic $p=7$ with $k=\bar{k}$, and let 
\[
f = x_0^{2} - x_1^{6} x_2^{2} + x_3^{3} \in R.
\] 
According to Fedder's Criterion \cite{Fedder83}, $R/(f)$ is $F$-pure. Indeed, we have $f^{7-1} = (x_0^2 - x_1^6 x_2^2 + x_3^3)^6$, which includes the term $(x_0^2)^3 (x_1^6 x_2^2) (x_3^3)^2$, and this term is not in $\fm^{[7]} = (x_0^7,x_1^7,x_2^7,x_3^7)$. Moreover, 
\[
x_2 (x_0^2)^3 (x_1^6 x_2^2) (x_3^3)^2 \notin \fm^{[7]},
\] 
hence $x_2 \notin \PP(R/(f))$. 

Consider the radical of the Jacobian ideal of $f$:
\[
\sqrt{Jac(f)} = \sqrt{(2x_0, -6x_1^5 x_2^2, -2x_1^6 x_2, 3x_3^2)}.
\] 
Setting $2x_0=0$ gives $x_0=0$. The equation $-6x_1^5 x_2^2=0$ implies $x_1=0$ or $x_2=0$, and $-2x_1^6 x_2=0$ similarly implies $x_1=0$ or $x_2=0$. Finally, $3x_3^2=0$ implies $x_3=0$. Therefore, the singular locus of $R$ is the set of points where $x_0=0$, $x_3=0$, and either $x_1=0$ or $x_2=0$. Equivalently, the varieties $V(x_0,x_1,x_3)$ and $V(x_0,x_2,x_3)$ consist of points of the form $(0,0,x_2,0)$ and $(0,x_1,0,0)$, respectively. Their intersection, $V(x_0,x_1,x_3)\cap V(x_0,x_2,x_3)$, corresponds to the points where $x_0=0$, $x_3=0$, and either $x_1=0$ or $x_2=0$. Thus, the singular locus is 
\[
\text{Sing}(R) = V(x_0,x_1,x_3) \cap V(x_0,x_2,x_3).
\] 
In particular, by \cite[Proposition~3.6]{AberbachEnescu2005}, for any prime ideal $q \in \text{Spec}(R)$ with $\PP(R)\subseteq q$, we have $\PP(R)R_q = \PP(R_q)$. Hence, the splitting prime ideal of $R$ is not the maximal ideal and must be either $(x_0,x_1,x_3)$ or $(x_0,x_2,x_3)$. We show that $(R/(f))_{(x_0,x_2,x_3)}$ is strongly $F$-regular.

Observe that 
\[
x_0 ((x_0^2)^2 (x_1^6 x_2^2)^3 (x_3^3)^2) \notin (x_0^7, x_2^7, x_3^7) R_{(x_0,x_2,x_3)}.
\] 
The term $(x_0^2)^2 (x_1^6 x_2^2)^3 (x_3^3)^2$ appears in the expansion of $f^6$. By \cite{Glassbrenner1998}, $(R/(f))_{(x_0,x_2,x_3)}$ is strongly $F$-regular since $(R/(f))_{x_0}$ is regular. Therefore, the splitting prime ideal of $(R/(f))_{(x_0,x_2,x_3)}$ is zero. Hence $R/(f)$ is $F$-pure, and 
\[
\PP(R/(f)) = (x_0,x_1,x_3).
\]

Let $\varepsilon = x_1^n$. Then $(f,x_1,x_2,x_3)$ is a system of parameters of $R$. By Theorem~\hyperref[theorem:3.3]{\ref*{theorem:3.3}}, if $\varepsilon \in \PP(R/(f))^{[p]} \cap (f,x_1,x_2,x_3)^{[p]}$, then $a_e(R/(f)) = a_e(R/(f+\varepsilon))$ for all $e \in \NN$. Hence 
\[
a_e(R/(f)) = a_e(R/(f+x_1^n)) \quad \text{for all } n \geq 7.
\] 
If $p \nmid n$, then 
\[
\sqrt{Jac(f+x_1^n)} = (x_0,x_1,x_3) = \PP(R/(f)).
\] 
Therefore, $R/(f) \not\cong R/(f+x_1^n)$ for all $n>7$ with $7 \nmid n$, while 
\[
a_e(R/(f)) = a_e(R/(f+x_1^n)) \quad \text{for all } e \in \NN.
\]
\end{example}

  But, equality of Frobenius splitting dimension under small perturbation does not hold in general, even if $R$ is non-singular. 
\begin{example}\label{ex:four-two}
Let $k=\bar{k}$ be a field of characteristic $7$, and let 
\[
R = k[x,y,z,w]_{(x,y,z,w)}, \quad f = x^3 + y^3 + z^3 \in R.
\] 
By Fedder's criterion \cite{Fedder83}, $R/(f)$ is $F$-pure. Indeed, 
\[
\sqrt{Jac(f)} = \sqrt{(3x^2, 3y^2, 3z^2)}.
\] 
Hence, the singular locus of $R/(f)$ is $V(x,y,z)$.  

The elements $F^e_{*} 1, F^e_{*} w, \dots, F^e_{*} w^{p^e-1}$ generate free summands of $F^e_{*} R$, since 
\[
w^i (x^3 + y^3 + z^3)^{p^e-1} \notin \fm^{[p^e]} \quad \text{for all } 0 \leq i \leq p^e - 1.
\] 
Therefore, the splitting prime ideal of $R$ is 
\[
\PP(R/(f)) = (x,y,z),
\] 
and we have 
\[
a_e(R/(f)) = p^e \quad \text{and} \quad r_F(R/(f)) = 1.
\] 

Let $n \not\equiv 0 \pmod{7}$ and $\varepsilon = w^n$. We claim that $R/(f+\varepsilon)$ is strongly $F$-regular.  

Since $7 \nmid n$, the hypersurface $R/(f+\varepsilon)$ has an isolated singularity at $(x,y,z,w)$ by the Jacobian criterion. In $R/(f+\varepsilon)$, the localization $(R/(f+\varepsilon))_w$ is regular, and 
\[
w \big( (x^3)^2 (y^3)^2 (z^3)^2 \big) \notin (x^7, y^7, z^7, w^7) = \fm^{[7]},
\] 
where $(x^3)^2 (y^3)^2 (z^3)^2$ is a term in 
\[
(f+\varepsilon)^{p-1} = (f+\varepsilon)^6 = (x^3 + y^3 + z^3 + w^n)^6.
\] 
Hence, by \cite{Glassbrenner1998}, $R/(f+\varepsilon)$ is strongly $F$-regular.  

Since $R/(f+\varepsilon)$ is strongly $F$-regular, we have $\PP(R/(f+\varepsilon)) = 0$, which implies 
\[
\dim(R/\PP(R/(f+\varepsilon))) = \dim(R/(f+\varepsilon)) = 3 > \dim(R/\PP(R/(f))) = 1.
\] 
Moreover, $R/(f+\varepsilon)$ is singular, so by \cite[Corollary~16]{Huneke_2002}, we have 
\[
r_F(R/(f+\varepsilon)) = s(R/(f+\varepsilon)) < 1 = r_F(R/(f)).
\] 
\end{example}

\section{References}
    \nocite{*}
    \begin{center}\mbox{}
    \vspace{-\baselineskip}
    \printbibliography[heading=none]
    \end{center}

\end{document}